\newtheorem{thm}{Theorem}
\newtheorem*{lem}{Lemma}
\newtheorem*{proposition}{Proposition}
\newtheorem{corollary}{Corollary}
\newtheorem{lemma}{Lemma}
\title[]{Exponential Sums and Riesz Energies}
\author[]{Stefan Steinerberger}
\address{Department of Mathematics, Yale University}
\email{stefan.steinerberger@yale.edu}
\begin{document}
\begin{abstract} We bound an exponential sum that appears in the study of irregularities of distribution (the low-frequency Fourier energy of the sum of several Dirac measures) by geometric quantities: a special case is that for all $\left\{ x_1, \dots, x_N\right\} \subset \mathbb{T}^2$, $X \geq 1$ and a universal $c>0$
 $$   \sum_{i,j=1}^{N}{ \frac{X^2}{1 +  X^4 \|x_i -x_j\|^4}} \lesssim \sum_{k \in \mathbb{Z}^2 \atop \|k\| \leq X}{ \left| \sum_{n=1}^{N}{ e^{2 \pi i \left\langle k, x_n \right\rangle}}\right|^2} \lesssim  \sum_{i,j=1}^{N}{ X^2 e^{-c X^2\|x_i -x_j\|^2}}.$$
Since this exponential sum is intimately tied to rather subtle distribution properties of the points, we obtain nonlocal structural statements for near-minimizers of the Riesz-type energy. For $X \gtrsim N^{1/2}$ both upper and lower bound match for maximally-separated point sets satisfying $\|x_i -x_j\| \gtrsim N^{-1/2}$. 
\end{abstract}
\subjclass[2010]{11L07, 42B05, 52C35 (primary),  74G65 (secondary)}
\keywords{Riesz energy, Discrepancy, Exponential sums, Fej\'{e}r kernel.}

\maketitle

\section{Introduction and Main Results}
\subsection{Introduction}
Let $\left\{ x_1, \dots, x_N\right\} \subset \mathbb{T}^2$ (throughout this paper normalized to $\mathbb{T}^2 \cong [0,1]^2$). Montgomery's theorem \cite{mont1} (see also Beck \cite{beck1, beck}) is a classical example of an irregularity of distribution phenomenon: there exists a disk $D \subset \mathbb{T}^2$ with radius $1/4$
or $1/2$ such that the number of elements in the disk substantially deviates from its expectation
$$ \left| \# \left\{1 \leq i \leq N: x_i \in D\right\} - N|D|\right| \gtrsim N^{1/4}.$$
One way to prove this type of result is by estimating the $L^{\infty}-$norm from
below by the $L^2-$norm and then use Parseval's identity to separate the Fourier transform
of the characteristic function of the geometric shape (here: a disk) and the Fourier transform of Dirac measures located at $\left\{ x_1, \dots, x_N\right\} \subset \mathbb{T}^2$
$$ \widehat{ \sum_{n=1}^{N}{\delta_{x_n}}} = \sum_{k \in \mathbb{Z}^2}{ \left( \sum_{n=1}^{N}{ e^{-2 \pi i \left\langle k, x_n \right\rangle}}\right) e^{2\pi i \left\langle k, x\right\rangle} }.$$
A fundamental ingredient of the method is the fact that the Fourier transform of Dirac measures cannot be too small on low frequencies.

\begin{lem}[Montgomery \cite{mont1}] For any $\left\{ x_1, \dots, x_N\right\} \subset \mathbb{T}^2$ and $X_1, X_2 \geq 0$
 $$\sum_{|k_1| \leq X_1 \atop |k_2| \leq X_2}{ \left| \sum_{n=1}^{N}{ e^{2 \pi i \left\langle k, x_n \right\rangle}}\right|^2} \geq N X_1 X_2.$$
\end{lem}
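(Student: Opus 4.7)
The plan is to insert a nonnegative Fourier-side weight that is bounded above by $1$ and whose inverse Fourier transform is pointwise nonnegative; the textbook object with both properties is a product of two one-dimensional Fej\'er kernels. Expanding the square gives
$$ \sum_{|k_1|\leq X_1, |k_2|\leq X_2} \left|\sum_{n=1}^N e^{2\pi i\langle k,x_n\rangle}\right|^2 = \sum_{n,m=1}^N \sum_{|k_1|\leq X_1,|k_2|\leq X_2} e^{2\pi i \langle k, x_n-x_m\rangle}. $$

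Writing $N_j = \lfloor X_j\rfloor + 1$, I would introduce the weight
$$ c(k_1,k_2) = \left(1 - \frac{|k_1|}{N_1}\right)\left(1 - \frac{|k_2|}{N_2}\right), $$
supported on $|k_j|\leq N_j - 1 \leq X_j$ and taking values in $[0,1]$, so that inserting it on the frequency side only decreases the sum. The point of this particular choice is that its Fourier inverse $K(x) = F_{N_1-1}(x_1)\, F_{N_2-1}(x_2)$ factors as a product of two classical one-dimensional Fej\'er kernels and is therefore pointwise nonnegative, with $K(0) = N_1 N_2 \geq X_1 X_2$.

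Combining the two ingredients,
$$ \sum_{|k_1|\leq X_1,|k_2|\leq X_2}\left|\sum_{n} e^{2\pi i\langle k,x_n\rangle}\right|^2 \geq \sum_{k} c(k)\left|\sum_{n} e^{2\pi i\langle k, x_n\rangle}\right|^2 = \sum_{n,m=1}^N K(x_n - x_m) \geq N K(0) \geq N X_1 X_2, $$
where the penultimate inequality drops the off-diagonal terms, each of which is nonnegative because $K\geq 0$. There is no real obstacle in this argument: it rests entirely on the classical fact that the Fej\'er kernel is pointwise nonnegative while all of its Fourier coefficients lie in $[0,1]$, and the only minor bookkeeping is the rounding-up built into $N_j$ so that the case of real (and small) $X_j$ is handled uniformly.
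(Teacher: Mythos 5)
Your argument is correct and is exactly the one the paper itself sketches when it ``emulates the proof of Montgomery'' inside the proof of Theorem 1: insert the Fej\'er weights $(1-|k_1|/X_1)(1-|k_2|/X_2)$, expand the square to get a product of nonnegative Fej\'er kernels, and keep only the diagonal terms $m=n$, each contributing $X_1X_2$. Your rounding $N_j=\lfloor X_j\rfloor+1$ is a harmless (and slightly more careful) way to handle non-integer $X_j$.
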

This inequality may be interpreted as a two-dimensional analogue of a result of Cassels \cite{cassels} and is related to a result of Siegel \cite{siegel}. It is essentially sharp: let $p$ be a prime, $x_n = (n/p, 0)$ for $0 \leq n \leq p-1$ and set $X_1 = p-1$ and $X_2$ = 1, then
  $$3 p^2 = \sum_{|k_1| \leq p-1 \atop |k_2| \leq 1}{ \left| \sum_{n=1}^{p}{ e^{2 \pi i \left\langle k, x_n \right\rangle}}\right|^2} \geq p(p-1).$$
A corollary below will show that the inequality is sharp for all $X_1 = X_2 \gtrsim N^{1/2}$ whenever the points are maximally separated (meaning $\|x_i -x_j\| \gtrsim N^{-1/2}$ for  $i \neq j$). The interesting parameter range for the inequality to be nontrivial is $X_1 X_2 \gtrsim N$ because the term for $(k_1, k_2) = (0,0)$ has size $N^2$.

\subsection{The result.} The purpose of this paper is to point out that there is a lower bound that connects a nonlocal functional resembling a Riesz energy to the exponential sum; for simplicity, we first state and discuss the case $X_1=X_2$ in two dimensions and refer to \S 1.4. for the general formulation.

\begin{thm}[Special case $X_1=X_2$, $d=2$] For all $\left\{ x_1, \dots, x_N\right\} \subset \mathbb{T}^2$ and $X \geq 0$
 $$\sum_{\|k\| \leq X}{ \left| \sum_{n=1}^{N}{ e^{2 \pi i \left\langle k, x_n \right\rangle}}\right|^2} \gtrsim \sum_{i,j=1}^{N}{ \frac{X^2}{1 +  X^4 \|x_i -x_j\|^4}}.$$
\end{thm}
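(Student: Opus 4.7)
The plan is to produce a nonnegative trigonometric polynomial $K:\mathbb{T}^{2}\to\mathbb{R}_{\geq 0}$ whose Fourier coefficients satisfy $0\leq\widehat{K}(k)\leq 1$ and vanish outside $\|k\|\leq X$, and which enjoys the pointwise bound $K(x)\gtrsim X^{2}/(1+X^{4}\|x\|^{4})$ on $\mathbb{T}^{2}$. Given such a $K$, expanding the inner square and using $\widehat{K}\leq \mathbf{1}_{\|k\|\leq X}$ together with $|\sum_{n}e^{2\pi i\langle k,x_{n}\rangle}|^{2}\geq 0$ yields
\[
\sum_{\|k\|\leq X}\!\left|\sum_{n=1}^{N}e^{2\pi i\langle k,x_{n}\rangle}\right|^{2}\!\geq\,\sum_{k\in\mathbb{Z}^{2}}\widehat{K}(k)\left|\sum_{n=1}^{N}e^{2\pi i\langle k,x_{n}\rangle}\right|^{2}\!=\sum_{i,j=1}^{N}K(x_{i}-x_{j})\,\gtrsim\,\sum_{i,j=1}^{N}\frac{X^{2}}{1+X^{4}\|x_{i}-x_{j}\|^{4}}.
\]

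My candidate is the tensor product $K(x_{1},x_{2}):=(F_{n}\ast F_{n})(x_{1})\cdot(F_{n}\ast F_{n})(x_{2})$, where $F_{n}$ is the classical one-dimensional Fej\'er kernel of degree $n$ and $n$ is chosen so that $n\sqrt{2}\leq X$. The one-dimensional factor $F_{n}\ast F_{n}$ has Fourier coefficients $(1-|k|/(n+1))_{+}^{2}\in[0,1]$ supported in $|k|\leq n$, is nonnegative, and is moreover \emph{strictly} positive on $\mathbb{T}$. The convolution step is essential: the plain tensor product $F_{n}(x_{1})F_{n}(x_{2})$ fails a pointwise lower bound because $F_{n}$ vanishes on the lattice $\{k/(n+1):1\leq k\leq n\}$, and these zero lines could coincide with pair differences $x_{i}-x_{j}$. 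Thus $K\geq 0$, $\widehat{K}(k_{1},k_{2})\in[0,1]$, and $\widehat{K}$ vanishes outside $|k_{1}|,|k_{2}|\leq n$, hence outside $\|k\|\leq X$.

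The crux of the argument is the one-dimensional pointwise lower bound
\[
(F_{n}\ast F_{n})(t)\,\gtrsim\,\min\!\left(n,\ \frac{1}{n\,t^{2}}\right)\qquad(t\in\mathbb{T}).
\]
Since $F_{n}(s)\gtrsim n+1$ on $|s|\leq c/(n+1)$, one has
\[
(F_{n}\ast F_{n})(t)=\int_{\mathbb{T}}F_{n}(s)\,F_{n}(t-s)\,ds\,\gtrsim\,(n+1)\int_{J_{t}}F_{n}(u)\,du,
\]
where $J_{t}$ is an interval of length $\asymp 1/(n+1)$ centered at $t$. Using the explicit form $F_{n}(u)=(n+1)^{-1}\sin^{2}((n+1)\pi u)/\sin^{2}(\pi u)$ and the substitution $v=(n+1)u$, the remaining integral becomes one of $\sin^{2}(\pi v)/\sin^{2}(\pi v/(n+1))$ over an interval of unit length about $(n+1)t$, which evaluates to $\asymp 1$ for $|t|\leq 1/(n+1)$ and to $\asymp 1/((n+1)^{2}t^{2})$ for larger $|t|$.

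With the one-dimensional estimate in hand, the pointwise bound $K(x)\gtrsim X^{2}/(1+X^{4}\|x\|^{4})$ follows by a short case analysis according to whether each of $|x_{1}|,|x_{2}|$ is smaller or larger than $1/X$; the only algebraic input is $(x_{1}^{2}+x_{2}^{2})^{2}\geq 2\,x_{1}^{2}x_{2}^{2}$, used when both coordinates exceed $1/X$. I expect the main obstacle to be the pointwise lower bound on $F_{n}\ast F_{n}$: one has to extract from the convolution not merely strict positivity (which is easy from $F_{n}\geq 0$ and $F_{n}>0$ almost everywhere) but the sharp quadratic envelope $1/(nt^{2})$, which is what makes the quartic decay $X^{2}/(X^{4}\|x\|^{4})$ on the right-hand side come out correctly.
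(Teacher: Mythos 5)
Your proof is correct, and at the structural level it follows the paper's strategy: both arguments minorize $\mathbf{1}_{\|k\|\le X}$ by the nonnegative Fourier coefficients of a nonnegative band-limited kernel, lower-bound that kernel pointwise by $\asymp n/(1+n^2t^2)$ in each coordinate, tensorize over the two coordinates, and finish with the identical algebraic manipulation of the product denominator via $x_1^2x_2^2\le \|x\|^4$ and $1+X^2\|x\|^2\lesssim 1+X^4\|x\|^4$. The genuine difference lies in the key lemma, i.e., in how the zeros of the Fej\'er kernel are removed: you take the self-convolution $F_n*F_n$, with coefficients $(1-|k|/(n+1))^2$, and prove the pointwise lower bound by restricting the convolution to a window where one factor is $\gtrsim n$ and integrating the other over one full period; the paper instead takes the further Ces\`aro average $\frac{1}{X}\sum_{s=1}^{X}F_{s-1}$ and proves the corresponding bound by splitting $\sum_{k\le N}\sin^2(k\pi x)/k$ into blocks of length $\sim 1/x$ (its Lemma 1). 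Your kernel is somewhat cleaner (closed-form coefficients, a shorter proof of the pointwise bound), and you are more careful than the paper about fitting the square spectrum $|k_1|,|k_2|\le n$ inside the Euclidean ball $\|k\|\le X$. What the paper's averaging buys in exchange is the extra factor $\log(e+N|x|)$ in the one-dimensional lower bound, which is what produces the logarithm in Theorem 3; the self-convolution decays like exactly $1/(nt^2)$ with no logarithmic gain, but that gain is discarded in Theorem 1 anyway. The only loose end is trivial: your construction requires $X\ge\sqrt2$ so that an admissible $n\ge 1$ exists, and for $X<\sqrt2$ the claim reduces to $N^2\gtrsim X^2N^2$, which already follows from the $k=0$ term.
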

 By only summing over the diagonal terms $i=j$, we recover the original estimate $NX^2$ up to constants, however,
 the off-diagonal terms carry additional information: if there are many pairs of points with $\| x_i - x_j\| \lesssim X^{-1}$,
 they can contribute substantially. This allows us to slightly refine existing results where this exponential sum plays a role (see Corollary 1 or \cite{mont} for more examples).
\begin{corollary}
Let $\left\{ x_1, \dots, x_N\right\} \subset \mathbb{T}^2$. There exists a disk $D$ of radius $1/4$
or $1/2$ such that the number of elements in the disk deviates from what its area predicts by 
$$ \left| \# \left\{1 \leq i \leq N: x_i \in D\right\} - N|D|\right| \gtrsim \left( N^{-3/2} \sum_{i,j=1}^{N}{ \frac{N}{1 +  N^2 \|x_i -x_j\|^4}}  \right)^{1/2}.$$
\end{corollary}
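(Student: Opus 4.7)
\emph{Proof plan.} The plan is to run Montgomery's original argument for his disk discrepancy theorem, but to insert Theorem 1 at the point where Montgomery uses his Lemma. For each radius $r \in \{1/4, 1/2\}$, I consider the translated-disk discrepancy
$$D_r(y) = \sum_{n=1}^N \chi_{B_r}(x_n - y) - N\pi r^2, \qquad y \in \mathbb{T}^2.$$
Since $\|D_r\|_{L^\infty(\mathbb{T}^2)} \geq \|D_r\|_{L^2(\mathbb{T}^2)}$, it is enough to bound the latter from below, and Parseval rewrites it as
$$\|D_r\|_{L^2}^2 = \sum_{k \neq 0} |\widehat{\chi_{B_r}}(k)|^2 \Bigl| \sum_{n=1}^N e^{2\pi i \langle k, x_n\rangle} \Bigr|^2, \qquad \widehat{\chi_{B_r}}(k) = \frac{r\, J_1(2\pi r |k|)}{|k|}.$$

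The first substantive step is Montgomery's two-radius trick. The stationary-phase asymptotics for $J_1$ imply $J_1(\pi |k|/2)^2 + J_1(\pi |k|)^2 \gtrsim 1/|k|$ uniformly in $|k| \geq 1$, because the two oscillating factors cannot simultaneously be small. Hence for each nonzero $k$ at least one of $r \in \{1/4, 1/2\}$ satisfies $|\widehat{\chi_{B_r}}(k)|^2 \gtrsim \|k\|^{-3}$, and adding the two Parseval identities gives
$$\max_{r \in \{1/4,1/2\}} \|D_r\|_{L^2}^2 \gtrsim \sum_{0 < \|k\| \leq X} \frac{1}{\|k\|^3} \Bigl| \sum_{n=1}^N e^{2\pi i \langle k, x_n\rangle} \Bigr|^2$$
for any truncation $X \geq 1$.

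With this in hand I would finish via Abel summation. Writing $S(k) = \sum_n e^{2\pi i \langle k, x_n\rangle}$ and $A(Y) = \sum_{0 < \|k\| \leq Y} |S(k)|^2$, summation by parts gives
$$\sum_{0 < \|k\| \leq X} \frac{|S(k)|^2}{\|k\|^3} = \frac{A(X)}{X^3} + 3\int_1^X \frac{A(Y)}{Y^4}\, dY \; \geq \; \frac{A(X)}{X^3}.$$
Theorem 1 then supplies $A(X) \gtrsim \sum_{i,j} X^2/(1+X^4\|x_i-x_j\|^4)$, and choosing $X \asymp N^{1/2}$ turns the right-hand side, up to constants, into $N^{-3/2} \sum_{i,j} N/(1+N^2\|x_i-x_j\|^4)$, which is the square of the claimed bound.

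The step I expect to require the most care is the Bessel comparison in the second paragraph: one must verify $J_1(\pi|k|/2)^2 + J_1(\pi|k|)^2 \gtrsim 1/|k|$ uniformly for all $|k| \geq 1$, with the large-$|k|$ regime handled by the standard cosine-asymptotics of $J_1$ and the (finitely many) small-$|k|$ integers handled by direct inspection, using that $J_1$ cannot vanish simultaneously at two commensurable arguments. Everything else — Parseval, the summation by parts, and the appeal to Theorem 1 — is routine.
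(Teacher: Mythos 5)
Your plan is exactly the route the paper intends: it states that Corollary~1 ``follows from Montgomery's original proof and then applying Theorem~1 in the last step,'' and your reconstruction of Montgomery's argument (Parseval for the translated-disk discrepancy, the two-radius Bessel trick giving $|\widehat{\chi_{B_{1/4}}}(k)|^2+|\widehat{\chi_{B_{1/2}}}(k)|^2\gtrsim \|k\|^{-3}$, then truncation at $\|k\|\le X$) is the standard and correct one. The Bessel step you single out as delicate is indeed the classical point, and is handled exactly as you describe.

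There is, however, one step that fails as literally written: you invoke Theorem~1 to conclude $A(X)\gtrsim \sum_{i,j}X^2/(1+X^4\|x_i-x_j\|^4)$, where your $A(X)=\sum_{0<\|k\|\le X}|S(k)|^2$ \emph{excludes} $k=0$, while the sum in Theorem~1 \emph{includes} it. The $k=0$ term contributes exactly $N^2$, and for $X\sim N^{1/2}$ this is comparable to the entire right-hand side of Theorem~1 (the diagonal $i=j$ alone already gives $NX^2\sim N^2$). So if the implicit constant $c_0$ in Theorem~1 is small and you take $X=N^{1/2}$ on the nose, the quantity $c_0\sum_{i,j}X^2/(1+X^4\|x_i-x_j\|^4)-N^2$ that you actually control can be negative, and the argument collapses. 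The repair is easy but must be made explicit: take $X=CN^{1/2}$ with $C$ large depending on $c_0$. Then the diagonal forces $\sum_{i,j}X^2/(1+X^4\|x_i-x_j\|^4)\ge CN^2$, so subtracting $N^2$ loses only a fraction $1/(c_0C)$ of the lower bound, and one checks that
$$X^{-3}\sum_{i,j}\frac{X^2}{1+X^4\|x_i-x_j\|^4}\;\ge\; C^{-5}\,N^{-3/2}\sum_{i,j}\frac{N}{1+N^2\|x_i-x_j\|^4},$$
which is the claimed bound up to constants. (Incidentally, the Abel summation is unnecessary: since every surviving $k$ has $\|k\|\le X$, the termwise bound $\|k\|^{-3}\ge X^{-3}$ already gives $\sum_{0<\|k\|\le X}|S(k)|^2\|k\|^{-3}\ge X^{-3}A(X)$.)
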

Note that this Riesz-type energy is always $\gtrsim N^{2}$ which recovers the lower bound $\gtrsim N^{1/4}$. The statement implies that point sets for which the energy is $\gg N^2$ have to have a subtle form of clustering where large disks $D$ of radius $1/4$ or $1/2$ deviate `substantially' from having $|D|N$ points. We emphasize that `substantially' is on the scale $\sim N^{\alpha}$, where $\alpha$ can be as low as $1/4$ -- the results operate on extremely fine scales.
Corollary 1 follows from Montgomery's original proof (see \cite{mont1, mont}) and then applying Theorem 1 in the last step. To the best of our knowledge, this is the first such bound for exponential sums and the first such statement for near-minimizers of a Riesz-type energy.  The connection between discrepancy and Riesz energy has been investigated before: we refer to Brauchart \cite{brauchart}, Bilyk \& Dai \cite{bil2}, Bilyk, Dai \& Matzke \cite{bil3}, Bilyk \& Lacey \cite{bil1} and Leopardi \cite{leopardi}.
 
 \subsection{Riesz energies.} This relates to existing results (see e.g. \cite{saff2}) regarding the behavior of
  $N$ points on a given manifold interacting via quantities like the Riesz kernel $\|x-y\|^{-s}$. The question is how to minimize, over all sets of $N$ points,
    $$ \sum_{i \neq j}{ f(\|x_i -x_j\|)} \qquad \mbox{and to determine the structure of minimizers.}$$
  
 These problems have a long history: the special case $\|x_i - x_j\|^{-1}$ on $\mathbb{S}^2$ is usually interpreted as the minimal energy configuration of $N$ electrons on a sphere and dates back to the physicist J. J. Thomson \cite{thomson} in 1904 (see Schwartz \cite{schwartz} for the recent solution of the case $N=5$). The case $f(\|x_i-x_j\|) = \|x_i - x_j\|^{-s}$ is usually refered to as Riesz energy and arises in many different settings (we refer to the surveys \cite{ saffneu1, saff2}, to \cite{saffneu3, saffneu2} for more recent results, to \cite{stein} for an application in combinatorial geometry and to the survey \cite{blanc} for the larger family of problems surrounding crystallization). A classical result for $\left\{x_1, \dots, x_N\right\} \subset \mathbb{T}^2$ (and other compact two-dimensional manifolds) is
 $$  \sum_{i,j=1 \atop i \neq j}^{N}{ \frac{1}{ \|x_i -x_j\|^2}} \gtrsim N^2 \log{N} $$
 and is well understood (see Kuijlaars \& Saff \cite{saff} for a much more precise result on the sphere). Minimizers are `roughly' evenly spaced and what remains to be understood are fine structural details of the minimizing configuration. An old question is under which circumstances the spacings in a minimizing configuration uniformly satisfy the optimal lower bound $\|x_i - x_j\| \gtrsim N^{-1/d}$  (see Dahlberg \cite{dahlberg} for one of the first results on this and Hardin, Reznikov, Saff \& Volberg \cite{vol} for a more recent result). This question is related to whether the singularity contributes substantially to the energy of a minimizing configuration; motivated by the Riesz-type energy appearing in Theorem 1, we prove that this is not the case.
 
 \begin{thm} For all $\left\{x_1, \dots, x_N\right\} \subset \mathbb{T}^2$
  $$  \sum_{i,j=1 \atop i \neq j}^{N}{ \frac{1}{1 +N \|x_i -x_j\|^2}} \gtrsim N \log{N}.$$
 \end{thm}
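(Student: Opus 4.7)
The plan is to reduce Theorem 2 to a Fourier-analytic pair-counting inequality and then recover the logarithm via a layer-cake integration. The key auxiliary estimate I would establish first is that, for every $\epsilon \in (0, 1/2]$,
\[ N_\epsilon := \#\left\{(i,j)\in\{1,\dots,N\}^2 : \|x_i-x_j\| \leq \epsilon\right\} \gtrsim N^2 \epsilon^2.\]

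To prove this, take $\psi_\epsilon$ to be the $L^1$-normalized indicator of the disk of radius $\epsilon/2$ and set $\phi_\epsilon = \psi_\epsilon * \psi_\epsilon$. Then $\phi_\epsilon$ is non-negative, supported in the disk of radius $\epsilon$, has $\int \phi_\epsilon = 1$ and $\phi_\epsilon(y) \leq \phi_\epsilon(0) \lesssim \epsilon^{-2}$. Crucially, $\widehat{\phi_\epsilon}(k) = |\widehat{\psi_\epsilon}(k)|^2 \geq 0$, so Parseval applied to $\mu = \sum_n \delta_{x_n}$ gives
\[\sum_{i,j=1}^{N} \phi_\epsilon(x_i - x_j) = \sum_{k \in \mathbb{Z}^2} \widehat{\phi_\epsilon}(k) \left|\widehat{\mu}(k)\right|^2 \geq \widehat{\phi_\epsilon}(0)\, N^2 = N^2.\]
The pointwise bound $\phi_\epsilon \lesssim \epsilon^{-2}$ then forces $N^2 \lesssim \epsilon^{-2} N_\epsilon$, which is the claimed pair count.

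Next I would apply the layer-cake representation to the decreasing function $f(r) = 1/(1+Nr^2)$, obtaining
\[\sum_{i,j=1}^{N} f(\|x_i - x_j\|) = \int_0^\infty \frac{2Ns}{(1+Ns^2)^2}\, N_s \, ds.\]
Using $N_s \gtrsim N^2 s^2$ on the range $s \in [N^{-1/2}, 1/2]$ and substituting $u = Ns^2$ collapses this piece to $\gtrsim N \int_1^{N/4} u\,(1+u)^{-2}\, du \asymp N \log N$; the complementary range $s \leq N^{-1/2}$ (using the trivial bound $N_s \geq N$) and the contribution from $s \geq 1/2$ (where $N_s \leq N^2$ and the integrand decays like $s^{-3}$) each contribute only $O(N)$. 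Subtracting the diagonal $N \cdot f(0) = N$ gives the claimed inequality $\sum_{i \neq j} f(\|x_i - x_j\|) \gtrsim N \log N$.

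The main obstacle is the pair-counting lemma, which encapsulates the structural fact that $N$ points on $\mathbb{T}^2$ cannot avoid clustering on every scale $\epsilon \geq N^{-1/2}$. Note that Theorem 1 cannot be directly invoked here, since it bounds Riesz-like sums from \emph{above} by exponential sums and thus points in the wrong direction; the proof instead relies on the complementary Fourier input $\widehat{\phi_\epsilon}(0) = 1$ together with the trivial mass identity $|\widehat{\mu}(0)| = N$.
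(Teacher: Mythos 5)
Your argument is correct, but it is a genuinely different route from the paper's. The paper proceeds by an induction on scales: it splits into a well-separated case (where the kernel is comparable to the singular Riesz kernel and the classical bound $\sum_{i\neq j}\|x_i-x_j\|^{-2}\gtrsim N^2\log N$ is invoked as a black box) and a clustered case (where matched pairs are merged into their midpoints via a potential-theoretic lemma, the energy per point grows by a factor $4/(10/3)>1$, and the procedure is iterated $O(\log N)$ times). You instead prove everything directly from Fourier positivity: the pair-counting bound $N_\epsilon\gtrsim N^2\epsilon^2$ follows correctly from testing $|\widehat\mu|^2$ against the positive-definite bump $\phi_\epsilon=\psi_\epsilon*\psi_\epsilon$ (whose coefficients $\widehat{\psi_\epsilon}(k)^2$ are nonnegative since $\psi_\epsilon$ is real and even, and which satisfies $\|\phi_\epsilon\|_\infty\le\|\psi_\epsilon\|_2^2\lesssim\epsilon^{-2}$), and the layer-cake integration $\int_{N^{-1/2}}^{1/2}\frac{2Ns}{(1+Ns^2)^2}N_s\,ds\gtrsim N\int_1^{N/4}u(1+u)^{-2}du\sim N\log N$ is a clean and valid way to harvest the logarithm; the diagonal only costs $O(N)$. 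Your proof is shorter, self-contained (it does not presuppose the classical Riesz lower bound, and in fact reproves it along the way), and avoids the delicate explicit constants in the paper's merging lemma. What the paper's approach buys instead is the structural byproduct the author emphasizes: the ``melting nearby pairs'' dynamics gives a regularity statement for near-minimizers that is not visible from the averaged pair-counting argument. Both are complete proofs of the stated inequality.
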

 
 The proof is not particularly two-dimensional in flavor; we also do not make any special use of the structure of $\mathbb{T}^2$ and various generalizations suggest themselves but are beyond the scope of this paper. Since this kernel is strictly smaller than $N^{-1}\|x_i- x_j\|^{-2}$  and without a singularity, Theorem 2 strenghtens the classical lower bound for the Riesz energy. A natural question is whether optimal configurations $\left\{x_1, \dots, x_N\right\} \subset \mathbb{T}^2$ minimizing the energy in Theorem 2 are also well-separated and satisfy $\|x_i - x_j\| \gtrsim N^{-1/2}$. This is probably harder than in the Riesz case.\\
 
 The main point can be summarized as follows:
 a fundamental question is how minimizers or near-minimizers of Riesz energies behave; Theorem 1 implies that whenever the Riesz-type energy
 $$  \sum_{i,j=1}^{N}{ \frac{1}{1 +  N^2 \|x_i -x_j\|^4}} \quad \mbox{is slightly larger than}~N \log{N},$$
 then this implies the presence of nontrivial and avoidable global irregularities of distribution phenomena (for instance, the existence of disks containing significantly more or less points than their area suggests to a larger extent than for other sets of points with the same cardinality). We believe this to be fairly powerful indicator that relatively simple Riesz-energy type quantities can capture somewhat subtle combinatorial properties of the set. Another simple example is obtained from reversing the direction of the statement: if $\left\{x_1, \dots, x_N \right\} \subset \mathbb{T}^2$ is a set of points such that every disk $D \subset \mathbb{T}^2$ of radius $1/4$ or $1/2$ contains $|D|N \pm c N^{1/4} \sqrt{\log{N}}$ points, then
 $$  \sum_{i,j=1}^{N}{ \frac{1}{1 +  N^2 \|x_i -x_j\|^4}} \lesssim_c N\log{N}.$$
 This seems like a statement that would be fairly difficult to prove via other means (because having a surplus of $\sim N^{1/4} \sqrt{\log{N}}$ points in a disk of area $\sim 1$ does not seem enough for the usual convexity arguments to yield an improvement).
 We believe that these results raise a large number of natural question and hope that they will inspire
 further work on the intersection of these fields.

\subsection{Lower and Upper Bounds.} We now describe the lower and upper bound.
\begin{thm} For all $\left\{ x_1, \dots, x_N\right\} \subset \mathbb{T}^d$ and all $X_1, X_2, \dots, X_d >0$
 $$\sum_{k \in \mathbb{Z}^d \atop |k_m| \leq X_m}{ \left| \sum_{n=1}^{N}{ e^{2 \pi i \left\langle k, x_n \right\rangle}}\right|^2} \gtrsim_d
 \sum_{i,j=1}^{N}{ \prod_{m=1}^{d}{ \frac{X_m \log{(e + N|  (x_{i,m} -x_{j,m})|)}}{1 +  X_m^2 (x_{i,m} -x_{j,m})^2} }}.$$
\end{thm}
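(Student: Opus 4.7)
My plan is to exploit the tensor product structure of both sides and reduce Theorem 3 to a one-dimensional kernel estimate. Expanding the squared exponential sum yields the identity
$$\sum_{\substack{k\in\mathbb{Z}^d\\|k_m|\leq X_m}}\left|\sum_{n=1}^{N}e^{2\pi i \langle k, x_n\rangle}\right|^2 = \sum_{i,j=1}^{N}\prod_{m=1}^{d}D_{X_m}(x_{i,m}-x_{j,m}),$$
where $D_X(t)=\sum_{|k|\leq X}e^{2\pi i k t}$ is the one-dimensional Dirichlet kernel. Because $D_X$ is sign-indefinite, I would first apply a Ces\`aro summation argument in each coordinate: monotonicity of $Y\mapsto \sum_{|k|\leq Y}|S(k)|^2$ combined with the identity $\sum_{Y=0}^{X}D_Y(t)=(X+1)F_{X+1}(t)$ for the Fej\'{e}r kernel gives
$$\sum_{|k|\leq X}|S(k)|^2 \geq \sum_{i,j}F_X(x_i-x_j),$$
and the analogous product statement in $d$ dimensions, replacing each sign-indefinite Dirichlet factor by a nonnegative Fej\'{e}r factor.

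The core step is then to lower bound the one-dimensional Fej\'{e}r sum (pointwise, in an averaged sense) by the target $X\log(e+N|t|)/(1+X^2 t^2)$. Because $F_X$ vanishes on the lattice $\{k/X:k\neq 0\}$, no single-scale kernel suffices. I would construct a positive superposition of Fej\'{e}r kernels on $O(\log N)$ dyadic scales,
$$\Psi_X(t)=\sum_{j=0}^{J}c_j F_{X/2^j}(t),\qquad J\sim \log_2 N,$$
with weights $c_j$ calibrated so that the zero sets at different scales interlace and the Fourier multiplier $\sum_j c_j(1-|k|2^j/X)_+$ is dominated by the indicator of $|k|\leq X$. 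The logarithmic enhancement in the pointwise bound arises precisely from summing $\log N$ Fej\'{e}r scales. Forming the tensor product $\prod_m\Psi_{X_m}$ yields a kernel with Fourier support in the box $|k_m|\leq X_m$; Fourier expansion then gives
$$\sum_{i,j}\prod_m\Psi_{X_m}(x_{i,m}-x_{j,m})=\sum_{|k_m|\leq X_m}\prod_m\widehat{\Psi_{X_m}}(k_m)\,|S(k)|^2\lesssim_d \sum_{|k_m|\leq X_m}|S(k)|^2,$$
and combining this with the pointwise bound on each $\Psi_{X_m}$ completes the argument.

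The main obstacle is the calibration of $\Psi_X$. The logarithmic gain forces $\int\Psi_X\sim\log N$, so the $k=0$ mode contributes $\sim(\log N)^d N^2$ on the test-function side, which is typically much larger than the target's diagonal contribution $N\prod X_m$. This surplus has to be absorbed into the trivial bound $|S(0)|^2=N^2\leq\sum_{|k_m|\leq X_m}|S(k)|^2$, and carrying the subtraction cleanly across the product of $d$ coordinates -- equivalently, treating the low-frequency modes with a bounded multiplier while reserving the $\log N$-amplified test function for the high-frequency part -- is the delicate technical point. Once this is arranged, the $d$-dimensional result is assembled from the one-dimensional construction by a straightforward product argument.
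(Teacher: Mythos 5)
Your strategy is essentially the paper's: the first display and the Ces\`aro step $\sum_{|k|\leq X}|S(k)|^2\geq\sum_{i,j}F_X(x_i-x_j)$ are exactly how the proof of Theorem 3 begins, and the idea of superposing Fej\'er kernels over many scales to produce a strictly positive kernel with a logarithmic gain is precisely the paper's second averaging. The paper's version of your $\Psi_X$ is the uniform average $\frac{1}{X}\sum_{s=1}^{X}F_{s-1}$, whose multiplier $\frac{1}{X}\sum_{s=1}^{X}(1-|k|/s)_+$ is trivially $\leq 1$, and whose pointwise lower bound $\gtrsim X\log(e+X|t|)/(1+X^2t^2)$ is the content of the paper's Lemma 1 (proved by block-summing $\sum_k \sin^2(k\pi t)/k$).

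As written, though, your argument is incomplete, and the point where you stop --- the calibration of $\Psi_X$ --- is a genuine gap rather than a technicality. First, your proposed repair would fail: if $\sum_j c_j\sim\log N$, the excess multiplier is not concentrated at $k=0$ but is of size $\log N$ on the whole range $|k|\lesssim X2^{-J}$, and $\log N\sum_{|k|\lesssim X2^{-J}}|S(k)|^2$ cannot in general be absorbed into $\sum_{|k|\leq X}|S(k)|^2$. Second, the tension you identify comes from aiming at the wrong target: any kernel with nonnegative multiplier bounded by $C$ and supported in $|k|\leq X$ has sup norm at most $C(2X+1)$, so no admissible $\Psi_X$ can dominate $X\log(e+N|t|)/(1+X^2t^2)$ at $|t|\sim 1/X$ when $N\gg X$. (Indeed, the statement with $N$ inside the logarithm fails for $N/2+N/2$ points at two locations $1/X$ apart with $X\ll N$: the left side is $\sim N^2X$ while the right side would be $\sim N^2X\log(N/X)$. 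The $N$ in the logarithm should be $X_m$, which is what Lemma 1 actually delivers.) Once the target is $X\log(e+X|t|)/(1+X^2t^2)$ --- whose total mass is $O(1)$, since the logarithm only appears where the kernel is already small --- your calibration problem disappears: take $J=\lfloor\log_2 X\rfloor$ and $c_j=j2^{-j}$, so that $\sum_j c_j=O(1)$, the multiplier is $\leq 2\cdot\mathbf{1}_{\{|k|\leq X\}}$, and for $|t|\sim 2^{j'}/X$ one has $\Psi_X(t)\geq\sum_{j>j'}c_jF_{X/2^j}(t)\gtrsim X\sum_{j>j'}j4^{-j}\gtrsim Xj'4^{-j'}$, matching the target. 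With that kernel (or with the paper's uniform average), your tensor-product assembly is correct and the proof closes.
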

$x_{i,m}$ denotes the $m-$th coordinate of the point $x_i$. We note that the special case in Theorem 1 does
not contain the logarithm (for simplicity of exposition and because it does not have a major impact on any of the asymptotics).
As for the upper bound, we show the following.
\begin{thm} There exists $c_d>0$ such that for all $\left\{x_1, \dots, x_N\right\} \subset \mathbb{T}^d$ and $X \geq 1$
 $$\sum_{k \in \mathbb{Z}^d \atop \|k\| \leq X}{ \left| \sum_{n=1}^{N}{ e^{2 \pi i \left\langle k, x_n \right\rangle}}\right|^2} \lesssim_d   \sum_{i,j=1}^{N}{ X^d e^{-c_d X^2\|x_i -x_j\|^2}}.$$
\end{thm}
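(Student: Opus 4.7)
The plan is to dominate the characteristic function of the Euclidean ball $\{\|k\| \leq X\}$ by a Gaussian weight, extend the sum to all of $\mathbb{Z}^d$, interchange the order of summation, and apply Poisson summation to transfer the Gaussian from the frequency side to the physical side. This is a textbook heat-kernel majorant argument and the cleanest tool available when one wants a pointwise, Gaussian-decaying bound on the space side.

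Concretely, I would begin with the elementary pointwise estimate $\mathbf{1}_{\|k\|\leq X} \leq e \cdot e^{-\|k\|^2/X^2}$, valid on all of $\mathbb{R}^d$. Since each Fourier mode contributes a nonnegative number, inserting this into the exponential sum and expanding the square gives
$$\sum_{\|k\|\leq X}\left|\sum_{n=1}^N e^{2\pi i\langle k,x_n\rangle}\right|^2 \;\leq\; e\sum_{k\in\mathbb{Z}^d} e^{-\|k\|^2/X^2}\left|\sum_{n=1}^N e^{2\pi i\langle k,x_n\rangle}\right|^2 \;=\; e\sum_{i,j=1}^N G_X(x_i-x_j),$$
where $G_X(y) = \sum_{k\in\mathbb{Z}^d} e^{-\|k\|^2/X^2}\,e^{2\pi i\langle k,y\rangle}$. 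Poisson summation applied to the Gaussian $\xi\mapsto e^{-\|\xi\|^2/X^2}$, whose Euclidean Fourier transform is $(\pi X^2)^{d/2}\,e^{-\pi^2 X^2\|x\|^2}$, converts this into the lattice sum
$$G_X(y) \;=\; (\pi X^2)^{d/2}\sum_{m\in\mathbb{Z}^d} e^{-\pi^2 X^2\|y-m\|^2}.$$

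The last step is to isolate the dominant summand. Choose the representative $y\in[-1/2,1/2]^d$ of $x_i-x_j$, so that $\|y\|=\|x_i-x_j\|_{\mathbb{T}^d}$; the term $m=0$ contributes exactly $e^{-\pi^2 X^2\|x_i-x_j\|^2}$. For any $m\neq 0$, on coordinates $\ell$ with $m_\ell\neq 0$ one has $|m_\ell-y_\ell|\geq |m_\ell|-\tfrac12\geq |m_\ell|/2$, so $\|m-y\|\geq \|m\|/2$, and therefore
$$\sum_{m\neq 0} e^{-\pi^2 X^2\|y-m\|^2} \;\leq\; \sum_{m\neq 0} e^{-\pi^2 X^2\|m\|^2/4} \;\leq\; C_d\, e^{-\pi^2 X^2/4},$$
where the last inequality uses $X\geq 1$ to make the first nonzero lattice point dominant. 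Since $\|x_i-x_j\|_{\mathbb{T}^d}^2\leq d/4$, this residual is absorbed into the main Gaussian $e^{-c_d X^2\|x_i-x_j\|^2}$ whenever $c_d$ is chosen a small multiple of $\pi^2/d$. Multiplying through by the prefactor $(\pi X^2)^{d/2}\asymp_d X^d$ gives the theorem.

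I do not expect a genuine obstacle here: the majorant inequality and Poisson summation are immediate, and the only care point is the elementary lattice-tail estimate, which is where the hypothesis $X\geq 1$ enters. For $X\ll 1$ the Gaussian on $\mathbb{T}^d$ is essentially constant and the claimed bound degenerates, so this restriction is natural. The deterioration of $c_d$ like $1/d$ is consistent with what one would expect from a pure Gaussian argument, and matching the sharp numerical constant does not seem essential to the paper.
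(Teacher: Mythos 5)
Your argument is correct and is essentially the paper's proof: the same Gaussian majorant $\mathbf{1}_{\|k\|\leq X}\leq e\,e^{-\|k\|^2/X^2}$ reduces the sum to $\sum_{i,j}G_X(x_i-x_j)$, which the paper identifies as $\|e^{X^{-2}\Delta}\sum_n\delta_{x_n}\|_{L^2}^2$ and bounds via the heat-kernel estimate, while you obtain the identical periodized-Gaussian bound by applying Poisson summation directly. The only difference is that you prove the torus heat-kernel (lattice-tail) estimate explicitly rather than quoting it, which is a presentational rather than a substantive distinction.
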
 
The extension to the more general case $|k_i| \leq X_i$ can be derived as well, we quickly remark after the proof how this could be done.
Theorem 4 can be used to show that in the regime $X \gtrsim N^{1/d}$, when Montgomery's Lemma starts being effective, it is also rather precise and sharp for well-separated point sets.
\begin{corollary} If the points $\left\{x_1, \dots, x_N\right\} \subset \mathbb{T}^d$ satisfy $\|x_i - x_j\| \gtrsim N^{-1/d}$ for all $i \neq j$, then for $X \gtrsim N^{1/d}$ we have matching upper and lower bounds
 $$\sum_{k \in \mathbb{Z}^d \atop \|k\| \leq X}{ \left| \sum_{n=1}^{N}{ e^{2 \pi i \left\langle k, x_n \right\rangle}}\right|^2} \sim NX^d.$$
\end{corollary}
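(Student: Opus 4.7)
The plan is to derive the two matching bounds separately from Theorems 3 and 4, with the separation hypothesis entering only into the upper estimate. For the lower bound $\gtrsim NX^d$, I would observe that the Euclidean ball $\{k \in \mathbb{Z}^d : \|k\| \leq X\}$ contains the cube $\{k : |k_m| \leq X/\sqrt{d}\}$, apply Theorem 3 with $X_1 = \dots = X_d = X/\sqrt{d}$, and then retain only the diagonal contribution $i = j$ on the right-hand side. Since $\log(e + 0) = 1$, this produces
\[
\sum_{\|k\| \leq X} \left| \sum_{n=1}^N e^{2\pi i \langle k, x_n \rangle} \right|^2 \gtrsim_d \sum_{i=1}^N \prod_{m=1}^d \frac{X}{\sqrt{d}} = N \left(\frac{X}{\sqrt{d}}\right)^d \gtrsim_d NX^d.
\]
(One could equally well cite Montgomery's Lemma on the inscribed cube.) No hypothesis on the points is needed for this half.

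For the upper bound, Theorem 4 reduces the task to showing
\[
\sum_{i,j=1}^N X^d e^{-c_d X^2 \|x_i - x_j\|^2} \lesssim_d NX^d.
\]
The $N$ diagonal terms $i = j$ contribute exactly $NX^d$, so it suffices to bound the off-diagonal part. I would fix $i$ and decompose the remaining indices into dyadic shells
\[
A_k = \{ j \neq i : 2^k N^{-1/d} \leq \|x_j - x_i\| < 2^{k+1} N^{-1/d} \}, \qquad k = 0, 1, 2, \dots
\]
The separation hypothesis $\|x_i - x_j\| \gtrsim N^{-1/d}$ implies that balls of radius $\sim N^{-1/d}$ around the $x_j$ are pairwise disjoint, so a standard volume packing argument on $\mathbb{T}^d$ gives $|A_k| \lesssim_d 2^{kd}$. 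Writing $\alpha := c_d X^2 N^{-2/d}$, which is $\gtrsim_d 1$ precisely by the assumption $X \gtrsim N^{1/d}$, I would then estimate
\[
\sum_{j \neq i} e^{-c_d X^2 \|x_i - x_j\|^2} \lesssim_d \sum_{k \geq 0} 2^{kd} e^{-\alpha 4^k} \lesssim_d 1,
\]
uniformly in $\alpha$ on any interval $[c, \infty)$. Summing over $i$ and multiplying by $X^d$ then yields the matching upper bound.

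The argument is essentially a short calculation once Theorems 3 and 4 are in hand, so there is no serious obstacle. The only conceptual point worth emphasizing is the role of the hypothesis $X \gtrsim N^{1/d}$: this is precisely the threshold at which the Gaussian in Theorem 4 becomes effective at the finest possible interpoint scale $\sim N^{-1/d}$, ensuring that the geometric series over annuli is dominated by its first term and produces an $O(1)$ bound per point. Below this threshold the off-diagonal terms would overwhelm the diagonal and the matching of upper and lower bounds would break down.
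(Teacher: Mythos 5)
Your proposal is correct and follows essentially the same route as the paper: the lower bound is Montgomery's Lemma (equivalently, the diagonal of Theorem 3), and the upper bound combines Theorem 4 with a shell decomposition around each $x_i$, using the separation to count $\lesssim 2^{kd}$ points per shell and the hypothesis $X \gtrsim N^{1/d}$ to sum the resulting rapidly decaying series. The paper uses arithmetic shells of width $N^{-1/d}$ with count $\lesssim k^{d-1}$ and compares to a Gaussian integral rather than your dyadic shells, but this is a cosmetic difference.
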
 
 Corollary 2 shows that the regime $X \gtrsim N^{1/d}$ is thus more or less understood; we believe that it could be of interest to study the precise behavior in the regime $X \lesssim N^{1/d}$ without the dominating term $N^2$ at $k=0$, i.e. to understand
$$\sum_{ \|k\| \leq X \atop k \neq 0}{ \left| \sum_{n=1}^{N}{ e^{2 \pi i \left\langle k, x_n \right\rangle}}\right|^2} \qquad \mbox{for}~X \lesssim N^{1/d}.$$
Generally, it is not going to be possible to obtain nontrivial results: if $p \leq N^{1/d}$ is prime, then we may consider the set
of $p^d$ points
$$ \left\{\left( \frac{i_1}{p}, \frac{i_2}{p}, \dots, \frac{i_d}{p}\right): 0 \leq i_1, \dots, i_d \leq p-1\right\}$$
and observe that for $k \neq 0$ and $\|k\|_{\ell^{\infty}} \leq p-1$
$$  \sum_{n=1}^{N}{ e^{2 \pi i \left\langle k, x_n \right\rangle}}  = 0.$$
However, it remains entirely unclear what happens if $N \neq p^d$ for $p$ prime.  We observe that there exist $\left\{x_1, \dots, x_N \right\} \subset \mathbb{T}^d$ such that for all $X \geq 1$
$$ \sum_{ \|k\| \leq X \atop k \neq 0}{ \left| \sum_{n=1}^{N}{ e^{2 \pi i \left\langle k, x_n \right\rangle}}\right|^2} \lesssim X^{d+2} (\log{N})^{2d-2}.$$
For $X \sim N^{1/d}$, this coincides with the optimal result $\sim NX^2$ up to a logarithm.
If  $\left\{x_1, \dots, x_N \right\} \subset \mathbb{T}^d$ is a low-discrepancy set and satisfies
$$ \mbox{extreme Discrepancy}\left(\left\{x_1, \dots, x_N \right\}\right) \lesssim \frac{\left(\log{N}\right)^{d-1}}{N},$$
where we refer to \cite{pill,kuip} for the relevant definitions and constructions of such sets, then the Koksma-Hlawka inequality implies
$$ \left| \sum_{n=1}^{N}{ e^{2 \pi i \left\langle k, x_n \right\rangle}}\right| =  \left| \sum_{n=1}^{N}{ e^{2 \pi i \left\langle k, x_n \right\rangle}} - N\int_{\mathbb{T}^2}{e^{2\pi i \left\langle k, x\right\rangle } dx}\right| \lesssim \mbox{var}(e^{2\pi i \left\langle k, x\right\rangle }) \left(\log{N}\right)^{d-1},$$
where $\mbox{var}$ refers to the total variation. Since $\mbox{var}(e^{2\pi i \left\langle k, x\right\rangle }) \sim \|k\|$, we obtain
$$ \sum_{ \|k\| \leq X \atop k \neq 0}{ \left| \sum_{n=1}^{N}{ e^{2 \pi i \left\langle k, x_n \right\rangle}}\right|^2} \lesssim
\sum_{ \|k\| \leq X}{\|k\|^2 (\log{N})^{2d-2}} \sim X^{d+2}  (\log{N})^{2d-2}.$$
The next section discusses a connection between this question and a problem in irregularities of distribution that could be of independent interest. 

\subsection{Irregularities of (Heat) Distribution.} Suppose $f \in L^1(\mathbb{T}^d)$ is given and $f(x)$ describes the temperature in the point $x$. We are allowed to take $N$ measurements and would like to know the average temperature in the room, i.e.
estimate
$$ \int_{\mathbb{T}^d}{f(x) dx}.$$
However, the new ingredient is that we do not necessarily need to know the answer right away, it suffices to know the answer within $t$ units of time. In this time period, the temperature evolves according to the heat equation. We use $e^{t\Delta}f(x)$ to
denote the temperature at time $t$. Since the heat equation is smoothing and preserves the average value, it makes sense to wait until all $t$ units of time have passed and then sample $e^{t\Delta}f$ in $\left\{x_1, \dots, x_N\right\} \in \mathbb{T}^d$.

\begin{proposition} We have for all $f \in L^1(\mathbb{T}^d)$
$$ \sup_{\|f\|_{L^1(\mathbb{T}^d)} \leq 1} \left| \frac{1}{N} \sum_{i=1}^{N}{e^{t\Delta}f(x_i)} - \int_{\mathbb{T}^d}{f(x) dx}  \right| = \frac{1}{N} \left\| 
N -  \sum_{i=1}^{N}{e^{t\Delta}\delta_{x_i}} \right\|_{L^{\infty}(\mathbb{T}^d)}.$$
\end{proposition}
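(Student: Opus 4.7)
The plan is to transfer the heat semigroup from the test function $f$ onto the sum of Dirac masses and then invoke the standard $L^1$--$L^\infty$ duality. The whole statement is essentially a clean identity, so the proof should be quite short.

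First, I would write $e^{t\Delta}f(x_i) = \int_{\mathbb{T}^d} p_t(x_i-y)\, f(y)\, dy$, where $p_t$ is the heat kernel on $\mathbb{T}^d$. For any $t>0$ this kernel is smooth and even, so that $p_t(x_i-y) = p_t(y-x_i) = e^{t\Delta}\delta_{x_i}(y)$. Summing over $i$ and interchanging the finite sum with the integral yields
\[
\sum_{i=1}^{N} e^{t\Delta}f(x_i) \;=\; \int_{\mathbb{T}^d} f(y) \left( \sum_{i=1}^{N} e^{t\Delta}\delta_{x_i}(y) \right) dy.
\]
Since $\int_{\mathbb{T}^d} dy = 1$ on the normalized torus, one also has $\int_{\mathbb{T}^d} f\, dx = \int_{\mathbb{T}^d} f(y)\cdot 1\, dy$, so subtracting gives
\[
\frac{1}{N}\sum_{i=1}^{N} e^{t\Delta}f(x_i) \;-\; \int_{\mathbb{T}^d} f\, dx \;=\; \frac{1}{N} \int_{\mathbb{T}^d} f(y) \left( \sum_{i=1}^{N} e^{t\Delta}\delta_{x_i}(y) - N \right) dy.
\]

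At this stage the proposition reduces to the duality identity $\sup_{\|f\|_{L^1}\leq 1} \left| \int fg \right| = \|g\|_{L^\infty}$ applied with $g(y) = \sum_{i=1}^{N} e^{t\Delta}\delta_{x_i}(y) - N$. The upper bound is the trivial inequality $\left| \int fg \right| \leq \|f\|_{L^1}\|g\|_{L^\infty}$; for the matching lower bound I would use that $g$ is smooth (each $e^{t\Delta}\delta_{x_i}$ is), hence continuous on the compact torus, so $|g|$ attains its supremum at some point $y_\ast$, and a sequence of $L^1$-normalized bumps concentrated at $y_\ast$ realises $\|g\|_{L^\infty}$ in the limit. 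Pulling out the factor $1/N$ then gives the stated equality.

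There is no substantive obstacle here; the only ingredients are the self-adjointness of $e^{t\Delta}$ (equivalently the symmetry of $p_t$), which is what lets the operator be moved from $f$ onto $\sum_i \delta_{x_i}$, together with the smoothness of $\sum_i e^{t\Delta}\delta_{x_i}$ for $t>0$, which removes any distinction between $\sup$ and essential $\sup$ in the duality step.
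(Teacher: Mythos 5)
Your proof is correct and follows essentially the same route as the paper: self-adjointness of the heat semigroup (the symmetry of the kernel $p_t$) to move $e^{t\Delta}$ onto the Dirac masses, $L^1$--$L^\infty$ duality for the upper bound, and concentrated $L^1$-normalized bumps at a point where the (smooth) function $N-\sum_i e^{t\Delta}\delta_{x_i}$ attains its maximum modulus for sharpness. The only cosmetic difference is that the paper invokes $e^{t\Delta}1=1$ to handle the constant term, whereas you sidestep this by subtracting after transferring the semigroup; both are equivalent.
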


This shows that the problem is equivalent to the question of where to place $N$ Dirac measures (with weight $1/N$) so that their heat
evolution after $t$ units of time is as close to a constant function as possible.
\begin{quote}
\textbf{Open problem.} Given $t$ and $N$, how small can the error be and how would one arrange 
$\left\{x_1, \dots, x_N\right\} \in \mathbb{T}^d$ to achieve it?
\end{quote}
The interesting regime is $t \gtrsim 1/N^{2/d}$: we recall that $e^{t\Delta} \delta_x$ behaves essentially like a Gaussian supported on scale $\sim \sqrt{t}$. As soon as $t \gtrsim 1/N^{2/d}$ the Gaussians are so wide that they start to overlap and it becomes possible to obtain better and better approximations of the constant function. The next statement shows that trying to understand optimal sampling schemes is intimately tied to the behavior of the exponential sum for $X \lesssim N^{1/d}$.
\begin{proposition} For all $\left\{x_1, \dots, x_N \right\} \subset \mathbb{T}^2$
$$ \left\| N -  \sum_{n=1}^{N}{e^{t\Delta}\delta_{x_n}} \right\|^2_{L^{\infty}(\mathbb{T}^d)} \gtrsim \sum_{ \|k\| \leq t^{-1/2} \atop k \neq 0}{ \left| \sum_{n=1}^{N}{ e^{2 \pi i \left\langle k, x_n \right\rangle}}\right|^2}.$$
\end{proposition}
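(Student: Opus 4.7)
The plan is to expand everything in Fourier series on $\mathbb{T}^d$ and then apply Parseval after observing that the $L^\infty$ norm on the unit-volume torus dominates the $L^2$ norm. Recall that the Fourier coefficients of the heat kernel are $\widehat{e^{t\Delta}\delta_x}(k) = e^{-4\pi^2 t \|k\|^2} e^{-2\pi i \langle k, x\rangle}$, so
$$\sum_{n=1}^N e^{t\Delta}\delta_{x_n}(y) = \sum_{k \in \mathbb{Z}^d} e^{-4\pi^2 t \|k\|^2}\left(\sum_{n=1}^N e^{-2\pi i \langle k, x_n\rangle}\right) e^{2\pi i \langle k, y\rangle}.$$
The $k=0$ term contributes exactly $N$, so subtracting it gives
$$N - \sum_{n=1}^N e^{t\Delta}\delta_{x_n}(y) = -\sum_{k \neq 0} e^{-4\pi^2 t \|k\|^2}\left(\sum_{n=1}^N e^{-2\pi i \langle k, x_n\rangle}\right) e^{2\pi i \langle k, y\rangle}.$$

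Next I would use the trivial bound $\|f\|_{L^\infty(\mathbb{T}^d)}^2 \geq \|f\|_{L^2(\mathbb{T}^d)}^2$ (which holds because $\mathbb{T}^d$ has unit volume) and then apply Parseval's identity to the right-hand side above. This yields
$$\left\|N - \sum_{n=1}^N e^{t\Delta}\delta_{x_n}\right\|_{L^\infty(\mathbb{T}^d)}^2 \geq \sum_{k \neq 0} e^{-8\pi^2 t \|k\|^2}\left|\sum_{n=1}^N e^{-2\pi i \langle k, x_n\rangle}\right|^2.$$
The final step is to restrict the sum on the right to $\|k\| \leq t^{-1/2}$ and to note that for such $k$ the Gaussian weight satisfies $e^{-8\pi^2 t \|k\|^2} \geq e^{-8\pi^2}$, a positive absolute constant, which can be absorbed into the implicit constant.

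There is no real obstacle here: the argument is essentially Parseval plus the observation that the heat multiplier $e^{-4\pi^2 t \|k\|^2}$ is uniformly bounded below on the band $\|k\| \leq t^{-1/2}$. The only point that requires a small amount of care is the choice of the Fourier normalization (so that the heat symbol is $e^{-4\pi^2 t \|k\|^2}$ rather than $e^{-t\|k\|^2}$), but this affects only the constant $c_d$ implicit in the cutoff $\|k\| \leq t^{-1/2}$ and not the structure of the bound.
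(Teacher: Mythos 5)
Your proposal is correct and follows exactly the paper's argument: bound $L^\infty$ below by $L^2$ on the unit-volume torus, apply Parseval to kill the $k=0$ mode, and observe that the heat multiplier is bounded below by an absolute constant on the band $\|k\| \leq t^{-1/2}$. The only cosmetic difference is the Fourier normalization of the heat symbol, which, as you note, affects only the implicit constants.
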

Note that for $t \gtrsim N^{-2/d}$, we end up precisely with $\| k\| \lesssim t^{-1/2} \ll N^{1/d}$ and $k \neq 0$, which further motivates a precise understanding of the behavior of the exponential sum in this regime. It seems to suggest that lattices may be a good choice for $N=p^d$ but there might be too much loss in going from $L^{\infty}$ to $L^2$. Returning to the motivating problem of trying to measure temperature, it may be advantageous to not wait until $t$ units of time have passed and instead take some of the samples earlier than that.
\begin{quote}
\textbf{Open problem.} Are the two quantities
$$   \min_{x_n \in \mathbb{T}^d} \left\| 
N -  \sum_{n=1}^{N}{e^{t \Delta}\delta_{x_n}} \right\|_{L^{\infty}}  ~
\mbox{and} ~ \min_{x_n \in \mathbb{T}^d, t_n \leq t} \left\| 
N -  \sum_{n=1}^{N}{e^{t_n \Delta}\delta_{x_n}} \right\|_{L^{\infty}} $$
roughly comparable or can the second be substantially smaller than the first? Is there a difference if $L^{\infty}$ is
replaced by $L^{p}$?
\end{quote}
The question could be rephrased as whether in the approximation of constants by Gaussians it is advantageous to make them all as wide 
as allowed or whether one can gain significantly better approximations by taking them at varying width. The question seems nontrivial even for $d=1$.
 This curious approximation problem is likely to have other approximations as well (we refer to \cite{faul} for a related question in the theory of Gabor frames and Montgomery's work on theta functions \cite{montf}).

 \subsection{Pair correlation} If $(x_n)_{n=1}^{\infty}$ is a sequence on $[0,1]$, then we define the pair correlation function
 of the first $N$ points as
 $$ F_N(s) = \frac{1}{N} \left\{ 1\leq m,n \leq N: m \neq n \wedge \|x_m - x_n \| \leq \frac{s}{N} \right\}$$
 and $F(s) = \lim_{N \rightarrow \infty}{F_N(s)}$. Originally a concept in statistical mechanics, it has been of great interest in number theory in recent years \cite{pair1, pair2, pair3, pair4, pair5}. It is easy to see that for i.i.d. and uniformly distributed random variables, we have $F(s) = 2s$ almost surely. A recent result of Aistleitner, Lachmann \& Pausinger \cite{pair6} and, independently, Grepstad \& Larcher \cite{pair7} is that if a sequence satisfies $F(s) = 2s$, then it is uniformly distributed. A result of the author \cite{stein2}, inspired by these earlier works, is that if there exists a sequence of positive real numbers $(t_n)_{n=1}^{\infty}$ converging to 0 such that
 $$ \lim_{N \rightarrow \infty}{ \frac{1}{N^2} \sum_{i,j=1}^{N} \frac{1}{\sqrt{t_N}} \exp\left(-\frac{1}{\sqrt{t_N}} (x_i - x_j)^2\right) } = \sqrt{\pi},$$
 then $(x_n)_{n=1}^{\infty}$ is uniformly distributed. This is interesting when $t_N \sim N^{-2+2\varepsilon}$ because the values in the exponential function mostly depend on local gaps at scale $\sim N^{-1 + \varepsilon}$ and this is related to the notion of pair correlation and various generalizations.
 We observe that one usual criterion for uniform distribution of a sequence $(x_n)_{n=1}^{\infty}$ in $\mathbb{T}^d$ can be phrased as
 $$ \forall X \in \mathbb{N} \quad \lim_{N \rightarrow \infty}  \frac{1}{N^2} \sum_{k \in \mathbb{Z}^2 \atop \|k\| \leq X}{ \left| \sum_{n=1}^{N}{ e^{2 \pi i \left\langle k, x_n \right\rangle}}\right|^2} = 1.$$
 Returning to our upper bound
 $$  \frac{1}{N^2}  \sum_{k \in \mathbb{Z}^2 \atop \|k\| \leq X}{ \left| \sum_{n=1}^{N}{ e^{2 \pi i \left\langle k, x_n \right\rangle}}\right|^2} \leq \frac{1}{N^2} \sum_{i,j=1}^{N}{ X^d e^{-c_d X^2\|x_i -x_j\|^2}},$$
 we see that this gives rise to the natural analogue in higher dimensions. In particular, this would allow for the construction of criteria for uniform distribution analogous to \cite{stein2} in higher dimensions (where it might be more advantageous to work with the Jacobi theta function as opposed to Gaussians, we refer to \cite{stein2} for technical details). This would allow to obtain criteria for uniform distribution that only depend on the distribution of pairwise distances.

\section{Proof of Theorem 1 and Theorem 3}
\subsection{The Idea.}  
The projection of $f\in L^2(\mathbb{T})$ onto the $(2N+1)-$dimensional subspace generated by the first few exponentials is
easy to write down explicitly as a convolution with the Dirichlet kernel
$$ \pi_N f = D_N * f \qquad \mbox{where} \qquad D_N = \sum_{k=-N}^{N}{e^{2\pi i k x}} = \frac{\sin{((2N+1) \pi x)}}{\sin{(\pi x)}}$$
is the $N-$th Dirichlet kernel. Its slow decay implies that $D_N*f$ is a fairly nonlocal average of $f$ and its behavior is nontrivial. Averaging these projection generates better-behaved \textit{Fej\'{e}r kernel}
$$ F_N f := \frac{1}{N+1}\sum_{k=0}^{N}{\pi_n f} = \left(  \frac{1}{N+1}\sum_{k=0}^{N}{D_k}\right) * f = \left[\frac{1}{N+1}\left( \frac{\sin{((N+1)\pi x)}}{\sin{(\pi x)}}\right)^2\right] * f.$$ 
The results in this paper suggest that it may sometimes be advantageous to work with yet another averaging and consider smoothed Fej\'{e}r kernels (a C\'{e}saro mean of \textit{third} order applied to exponentials, of second order applied to the Dirichlet kernel and a simple averaging over Fej\'{e}r kernel). The crucial advantage is that this additional smoothing produces nonvanishing kernels.
\begin{lemma}[Averaging Fej\'{e}r Kernels] We have
$$  \frac{1}{N+1}\sum_{k=0}^{N}{F_k}(x)  \gtrsim \frac{N \log{\left(e + N|x|\right)}}{1+  N^2 x^2}.$$
\end{lemma}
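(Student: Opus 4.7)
The plan is to split the range of $|x|$ at the natural threshold $N|x| \sim 1$ and treat the two regimes separately, using only a Taylor expansion in one range and a more delicate Abel summation against a classical logarithmic identity in the other.

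First, in the regime $N|x| \leq c_0$ for a small constant $c_0$, I would exploit the Taylor approximation $\sin(\pi y) \sim \pi y$ near the origin. For every $k$ with $(k+1)|x| \leq 1/10$, both the numerator and denominator in $F_k(x) = \frac{1}{k+1}\bigl(\frac{\sin((k+1)\pi x)}{\sin(\pi x)}\bigr)^2$ are well-approximated by their linear parts, yielding the pointwise bound $F_k(x) \gtrsim k+1$. Summing this over $0 \leq k \leq \min(N, C/|x|)$ gives
\begin{equation*}
\frac{1}{N+1} \sum_{k=0}^{N} F_k(x) \gtrsim \frac{\min(N, 1/|x|)^2}{N+1} \gtrsim N,
\end{equation*}
which matches the target $\frac{N \log(e + N|x|)}{1 + N^2 x^2} \sim N$ in this range.

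In the regime $N|x| \geq c_0$, by periodicity I may assume $|x| \leq 1/2$, so $\sin(\pi x) \sim |x|$, which gives
\begin{equation*}
\frac{1}{N+1} \sum_{k=0}^{N} F_k(x) \sim \frac{1}{(N+1) x^2} \sum_{k=1}^{N+1} \frac{\sin^2(k \pi x)}{k}.
\end{equation*}
The identity $\sin^2(k \pi x) = \tfrac{1}{2}(1 - \cos(2 \pi k x))$ reduces the problem to estimating $\sum_{k=1}^{N+1} \frac{\cos(2\pi k x)}{k}$. Here I would use the classical $\sum_{k \geq 1} \frac{\cos(2 \pi k x)}{k} = -\log|2 \sin(\pi x)|$, combined with an Abel summation bound on the tail using the Dirichlet kernel estimate $\bigl|\sum_{k=1}^{K} \cos(2\pi k x)\bigr| \lesssim 1/|\sin(\pi x)|$, to show that the tail past $N+1$ contributes $O(1/(N|x|)) = O(1)$ in our regime. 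This produces
\begin{equation*}
\sum_{k=1}^{N+1} \frac{\sin^2(k \pi x)}{k} = \tfrac{1}{2} \log(N|x|) + O(1),
\end{equation*}
which is $\gtrsim \log(e + N|x|)$ once $N|x|$ is large enough; the bridging around $N|x| \sim c_0$ is absorbed into the splitting constant. Using $1 + N^2 x^2 \sim N^2 x^2$ in this regime, the claimed bound $\frac{\log(e+N|x|)}{N x^2} \sim \frac{N \log(e+N|x|)}{1 + N^2 x^2}$ follows.

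The main technical obstacle is the uniform control of the oscillatory sum: one needs the error between $\sum_{k\leq N+1} \frac{\cos(2\pi k x)}{k}$ and its infinite analogue to be $O(1)$ precisely in the range $|x| \gtrsim 1/N$, which is exactly what the Abel summation against Dirichlet's bound provides. Everything else — the small-$|x|$ Taylor estimate, the reduction via $\sin(\pi x)\sim|x|$, and patching the two regimes at the threshold — is routine book-keeping.
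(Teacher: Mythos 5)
Your argument is correct, and in the main regime $N|x|\gtrsim 1$ it takes a genuinely different route from the paper. Both proofs begin identically: the reduction to $\frac{1}{(N+1)\sin^2(\pi x)}\sum_{k\le N+1}\sin^2(k\pi x)/k$ and the trivial treatment of $N|x|\lesssim 1$ via $F_k(x)\sim k$. But where you invoke the exact identity $\sum_{k\ge 1}\cos(2\pi kx)/k=-\log|2\sin(\pi x)|$ and control the tail past $N$ by Abel summation against the Dirichlet-kernel bound, the paper instead partitions $\{1,\dots,N\}$ into $\sim Nx$ blocks of length $\sim 1/x$, uses equidistribution of $\sin^2(k\pi x)$ over each block to extract a factor $\sim 1/x$ per block, and sums $1/k$ over blocks to produce the logarithm. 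Your route is less elementary (it needs the log-sine Fourier series) but yields strictly more: it gives the two-sided asymptotic $\sum_{k\le N}\sin^2(k\pi x)/k=\tfrac12\log(N|x|)+O(1)$ with an explicit leading constant, which is exactly the sharpening the author remarks after the lemma that his block argument does not deliver (and your version holds for all $x$ with $N|x|\to\infty$, not just typical irrational $x$). The paper's argument, by contrast, is softer and would survive replacing $\sin^2$ by any nonnegative periodic profile with positive mean. One point you should make explicit rather than wave at: the asymptotic $\tfrac12\log(N|x|)+O(1)$ is only a useful lower bound once $N|x|\ge C_1$ for a large constant $C_1$ absorbing the $O(1)$; for the intermediate range $c_0\le N|x|\le C_1$ you need a separate positive lower bound, but your own Taylor estimate already supplies it (summing $F_k(x)\gtrsim k+1$ over $k\lesssim 1/|x|\gtrsim N/C_1$ gives $\gtrsim_{C_1} N$, matching the target there), so this is genuinely routine and not a gap.
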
 
\begin{proof} By symmetry, it suffices to show the result for $0 \leq x \leq 1/2$. If $x \leq k^{-1}$, then $F_k(x) \sim k$
and we see that therefore both sides of the inequality are of size $\sim N$ if $x \lesssim N^{-1}$. It remains to deal with the case $N^{-1} \leq x \leq 1/2$.
We start by rewriting the sum as
$$ \frac{1}{N+1} \sum_{k=0}^{N}{ F_{k-1}(x) }= \frac{1}{N+1}  \frac{1}{\sin{(\pi x)^2}} \sum_{k=0}^{N}{ \frac{ \left(\sin{(k \pi x)}\right)^2}{k}    } \gtrsim \frac{1}{N+1} \frac{1}{x^2}\sum_{k=0}^{N}{ \frac{ \left(\sin{(k \pi x)}\right)^2}{k}    }.$$
We proceed by splitting the set $\left\{0, \dots, N\right\}$ into $\sim Nx$ blocks of size $\sim 1/x$. Summing over one such
block has the effect of serving as an approximation of the integral over $(\sin{k \pi x})^2$ over one period and thus
\begin{align*}
\frac{1}{N+1} \frac{1}{x^2} \sum_{k=0}^{N}{ \frac{ \left(\sin{(k \pi x)}\right)^2}{k} } &\sim \frac{1}{N+1} \frac{1}{x^2} \sum_{k=1}^{Nx} \sum_{\ell=1}^{1/x}  \frac{ \left(\sin{((k/x+\ell) \pi x)}\right)^2}{(k/x + \ell)} \\
&\gtrsim  \frac{1}{N+1} \frac{1}{x^2} \sum_{k=1}^{Nx} \frac{1}{(k+1)/x} \sum_{\ell=1}^{1/x}   \left(\sin{((k/x+\ell) \pi x)}\right)^2 \\
&\gtrsim \frac{1}{N+1} \frac{1}{x^2} \sum_{k=1}^{Nx} \frac{1}{(k+1)/x} \frac{1}{x} \gtrsim \frac{\log{(N x)}}{N x^2}.
\end{align*}
\end{proof}
The proof could be slightly sharpened in the regime $1/N \ll x \ll 1$ so as to yield sharp constants. In particular, for typical irrational values of $x$ in this regime we would expect that for $N$ large
$$ \frac{1}{N x^2} \sum_{k=0}^{N}{ \frac{ \left(\sin{(k \pi x)}\right)^2}{k} } \sim \frac{1}{2N x^2}\sum_{k=1}^{N}{\frac{1}{k}}.$$

\subsection{The proof.}
\begin{proof}[Proof of Theorem 1] We give the proof in two dimensions, the general case follows in the same manner (different coordinates decouple). Our goal is to bound
 $$\sum_{|k_1| \leq X_1 \atop |k_2| \leq X_2}{ \left| \sum_{n=1}^{N}{ e^{2 \pi i \left\langle k, x_n \right\rangle}}\right|^2}  \qquad \mbox{from below}.$$
We start by emulating the proof of Montgomery \cite{mont}. Clearly,
\begin{align*}
\sum_{|k_1| \leq X_1 \atop |k_2| \leq X_2}{ \left| \sum_{n=1}^{N}{ e^{2 \pi i \left\langle k, x_n \right\rangle}}\right|^2}  &\geq 
 \sum_{|k_1| \leq X_1 \atop |k_2| \leq X_2}{ \left(1 - \frac{|k_1|}{X_1}\right)   \left(1 - \frac{|k_2|}{X_2}\right) \left| \sum_{n=1}^{N}{ e^{2 \pi i \left\langle k, x_n \right\rangle}}\right|^2} \\
 &=   \sum_{m,n=1}^{N}  \sum_{|k_1| \leq X_1 \atop |k_2| \leq X_2}  \left(1 - \frac{|k_1|}{X_1}\right)   \left(1 - \frac{|k_2|}{X_2}\right)  e^{2 \pi i \left\langle k, x_m-x_n \right\rangle}.
 \end{align*}
 Writing $x_m = (x_{m,1}, x_{m,2})$ allows us to write the inner sum as 
 \begin{align*}
  &= \left(   \sum_{|k_1| \leq X_1 }  \left(1 - \frac{|k_1|}{X_1}\right)    e^{2 \pi i \left\langle k_1, x_{m,1} -x_{n,1} \right\rangle}\right) \left(   \sum_{|k_2| \leq X_2 }  \left(1 - \frac{|k_2|}{X_2}\right)    e^{2 \pi i \left\langle k_2, x_{m,2} - x_{n,2} \right\rangle}\right) \\
  &= F_{X_1-1}(x_{m,1} - x_{n,1})F_{X_2-1}(x_{m,2} - x_{n,2}).
  \end{align*}
   The proof of the Montgomery's Lemma concludes by using the nonnegativity of the Fej\'{e}r kernel and considering only the diagonal elements $m=n$. We proceed by noting that we can add another layer of averaging and argue
\begin{align*}
\sum_{|k_1| \leq X_1 \atop |k_2| \leq X_2}{ \left| \sum_{n=1}^{N}{ e^{2 \pi i \left\langle k, x_n \right\rangle}}\right|^2} &\geq\frac{1}{X_1}\sum_{s =1}^{X_1} \frac{1}{X_2} \sum_{t=1}^{X_2} \sum_{|k_1| \leq s \atop |k_2| \leq t}{ \left| \sum_{i=1}^{N}{ e^{2 \pi i \left\langle k, x_i \right\rangle}}\right|^2} \\
&= \frac{1}{X_1}\sum_{s =1}^{X_1} \frac{1}{X_2} \sum_{t=1}^{X_2} \sum_{m,n=1}^{N}  F_{s-1}(x_{m,1} - x_{n,1})F_{t-1}(x_{m,2} - x_{n,2}) \\
&= \sum_{m,n=1}^{N} \left( \frac{1}{X_1}\sum_{s =1}^{X_1}  F_{s-1}(x_{m,1} - x_{n,1}) \right)  \left( \frac{1}{X_2}\sum_{t =1}^{X_2}  F_{t-1}(x_{m,2} - x_{n,2}) \right).
\end{align*}
Lemma 1 implies that
$$ \frac{1}{X_1}\sum_{s =1}^{X_1}  F_{s-1}(x_{m,1} - x_{n,1}) \gtrsim \frac{X_1 \log{(e + X_1(x_{m,1} - x_{n,1}))}}{1 +
X_1^2 (x_{m,1} - x_{n,1})^2}$$
and likewise for the second term. This then implies Theorem 3. Specializing to the case $X_1=X_2=X$ and
ignoring the logarithm, we obtain
     $$\sum_{\|k\| \leq X}{ \left| \sum_{n=1}^{N}{ e^{2 \pi i \left\langle k, x_n \right\rangle}}\right|^2}
     \gtrsim \sum_{i,j =1}^{N}  \frac{X}{1+ X^2 (x_{i,1}-x_{j,1})^2}  \frac{X}{1+ X^2 (x_{i,2}-x_{j,2})^2}$$
and simplify the denominator
\begin{align*}  (1+ X^2 (x_{i,1}-x_{j,1})^2)(1+ X^2 (x_{i,2}- x_{j,2})^2) &= 1 + X^2\|x_{i} - x_{j}\|^2\\
& + X^4  (x_{i,1}- x_{j,1})^2 (x_{i,2}-x_{j,2})^2.
\end{align*}
Finally, we estimate 
\begin{align*}
 X^4  (x_{i,1}-x_{j,1})^2 (x_{i,2}-x_{j,2})^2 &\leq X^4( (x_{i,1}-x_{j,1})^4 +  (x_{i,2}-x_{j,2})^4) \\
 &\leq  X^4( (x_{i,1}-x_{j,1})^2 +  (x_{i,2}-x_{j,2})^2)^2 \\
 &\leq X^4 \| x_i - x_j\|^4
 \end{align*}
 and
\begin{align*}
1 + X^2\|x_i - x_j\|^2 + X^4 \|x_i - x_j\|^4 &\leq (1+X^2\|x_i -x_j\|^2)^2 \\
&\leq 2 + 2X^4 \|x_i - x_j\|^4,
\end{align*}
which implies Theorem 1.
\end{proof}

\section{Proof of Theorem 2}
\subsection{The Idea.}
The argument uses induction on scales and the inequality
$$ \sum_{i,j=1 \atop i \neq j}^{N}{ \frac{1}{ \|x_i -x_j\|^2}} \gtrsim N^2 \log{N}.$$
Given a point set, we distinguish two cases
\begin{itemize}
\item \textit{(no clusters)}: there are many $(\geq N/10)$ points with the property that their nearest neighbor is distance
at least $1/(300 \sqrt{N})$ or
\item \textit{(clusters)}: many $(\geq 9N/10)$ points have their nearest neighbor very close at a distance
of less than $1/(300 \sqrt{N})$.
\end{itemize}
The first case is simple: we can take the subset of points and notice that they are not very sensitive to changes
of the kernel in the origin, in particular they do not notice the absence of the singularity and we can apply the
existing result for the Riesz energy. The clustered case is more interesting: we will construct a new set 
of $3N/5$ points with the property that these $3N/5$ points are located in pairs of two in at most $3N/10$ different locations.
The bilinear nature of the problem implies that the energy of these $3N/5$ points is 4 times the energy of 
$3N/10$ points placed at their locations. We then iterate the procedure and note that since $4 > 10/3$, 
every application of the second step leads to superlinear growth, which implies the result.
We require one elementary Lemma of a potential-theoretic nature; its purpose is to show that the force exerted by two
nearby  sources is essentially comparable to the force exerted by two points located at their center of mass.
\begin{lemma} Let $a,b \in \mathbb{T}^2$ satisfy $\|a-b\| \leq 1/(100\sqrt{N})$. For all $c \in \mathbb{T}^2$, $N \geq 1$
$$ \left| \frac{1}{1 + N\|a-c\|^2} + \frac{1}{1 + N\|b-c\|^2} - \frac{2}{1 + N\left\|\frac{a+b}{2}-c\right\|^2} \right| \leq \frac{1}{5000}\frac{1}{1 + N\|a-c\|^2}$$
\end{lemma}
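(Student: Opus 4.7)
The plan is to exploit the fact that the quantity to be estimated is a symmetric second difference of the smooth function $f(x) = (1+N\|x-c\|^2)^{-1}$ evaluated along the segment from $a$ to $b$, and thus should be controlled by the Hessian of $f$ times the squared increment $\|a-b\|^2$, which by hypothesis is at most $1/(10000\,N)$.

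First I would set $m = (a+b)/2$ and $h = (a-b)/2$, so $\|h\| \leq 1/(200\sqrt{N})$ and $\|h\|^2 \leq 1/(40000\,N)$. Introducing $g(t) = f(m+th)$, the left-hand side becomes $|g(1)+g(-1)-2g(0)|$. The standard identity
\begin{equation*}
g(1)+g(-1)-2g(0) = \int_0^1 (1-s)\bigl(g''(s) + g''(-s)\bigr)\,ds
\end{equation*}
together with $\int_0^1 (1-s)\,ds = 1/2$ gives the pointwise bound $|g(1)+g(-1)-2g(0)| \leq \max_{|s|\leq 1}|g''(s)|$, and $g''(s) = h^{T}\operatorname{Hess} f(m+sh)\,h$.

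Next I would compute the Hessian explicitly. Writing $u = x-c$, one obtains
\begin{equation*}
\operatorname{Hess} f(x) = -\frac{2N}{(1+N\|u\|^2)^2}\,I + \frac{8N^2}{(1+N\|u\|^2)^3}\,uu^{T},
\end{equation*}
whose two eigenvalues are directly computable. A short calculation with $s = N\|u\|^2$ shows that both eigenvalues have absolute value at most $2N(1+3s)/(1+s)^3$, which is in turn bounded by $6N/(1+N\|x-c\|^2)^2$. Combining this with $\|h\|^2 \leq 1/(40000\,N)$ yields
\begin{equation*}
\bigl|f(a)+f(b)-2f(m)\bigr| \;\leq\; \frac{3}{20000}\cdot\frac{1}{(1+N\|\xi-c\|^2)^2}
\end{equation*}
for some $\xi$ on the segment $[a,b]$.

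It then remains to transfer $\|\xi-c\|$ into $\|a-c\|$. Since $\|\xi - a\| \leq \|a-b\| \leq 1/(100\sqrt N)$, the triangle inequality gives $\sqrt{N}\|a-c\| \leq \sqrt{N}\|\xi-c\| + 1/100$, and the desired bound reduces to the elementary inequality
\begin{equation*}
1 + (u + \tfrac{1}{100})^2 \;\leq\; \tfrac{4}{3}\bigl(1+u^2\bigr)^2 \qquad \text{for all } u\geq 0,
\end{equation*}
with $u = \sqrt{N}\|\xi-c\|$. This is straightforwardly verified (the difference of the two sides is a polynomial whose minimum on $[0,\infty)$ is easily seen to be positive). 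I do not expect any real obstacle here: the only thing to watch is the bookkeeping of constants, since the numerical factor $1/5000$ on the right is not much larger than what the crudest Taylor bound produces, so the Hessian estimate must be carried out with the sharper constant $6N$ rather than a generic $O(N)$, and the triangle-inequality step must not waste more than a factor of $4/3$.
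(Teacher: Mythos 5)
The paper offers no proof of this lemma (it is ``elementary and left to the reader''), so there is nothing to compare against; your second-difference/Hessian argument is the natural one, and the bookkeeping checks out. Specifically: the integral form of the second difference, the Hessian formula and the eigenvalue bound $2N|3s-1|/(1+s)^3,\ 2N/(1+s)^2 \leq 2N(1+3s)/(1+s)^3 \leq 6N/(1+s)^2$, the resulting bound $\tfrac{3}{20000}(1+N\|\xi-c\|^2)^{-2}$, and the transfer inequality $1+(u+\tfrac{1}{100})^2 \leq \tfrac{4}{3}(1+u^2)^2$ (whose slack is comfortably positive, as the discriminant computation shows) combine to give exactly the constant $\tfrac{1}{5000}$. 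As a statement about the Euclidean metric on $\mathbb{R}^2$ the proof is complete.

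There is one point you should address explicitly: the lemma lives on $\mathbb{T}^2$, where $x \mapsto \|x-c\|^2$ is only piecewise smooth --- it has a gradient discontinuity on the cut locus of $c$ (the boundary of the Voronoi cell of $c$ in the lattice $\mathbb{Z}^2$). Your Hessian computation implicitly fixes a single Euclidean lift, which is legitimate only when $a$, $b$, $\tfrac{a+b}{2}$ and the whole segment between them realize their torus distance to $c$ through the same lattice translate. When the segment straddles the cut locus the second difference is genuinely \emph{first} order in $\|a-b\|$, not second order: in the model case $c=(0,y)$, $a=(\tfrac12-h,y)$, $b=(\tfrac12+h,y)$ with $h=\tfrac{1}{200\sqrt N}$, one gets $f(a)+f(b)-2f(m) \approx 2Nh/(1+N/4)^2 \sim N^{-3/2}$ against a right-hand side $\sim \tfrac{1}{5000}(1+N/4)^{-1} \sim N^{-1}$, and the ratio exceeds $1$ for all $N$ up to roughly $4\times 10^4$ (at $N=4$ it is about $50$). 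So with the torus metric the stated inequality actually fails near the cut locus for moderate $N$, and no smoothness argument can rescue it there. The honest fix is to either prove the lemma for a common lift (which is all that the proof of Theorem 2 uses, since the cut-locus contribution to the total energy error is $O(N^{1/2})$ and hence negligible against $N\log N$), or to restrict the statement to the case where $\|a-c\|+\|a-b\|$ is below the injectivity radius. Your argument is correct once such a restriction is in place; without it, the step ``compute the Hessian of $f$'' is applied to a function that is not twice differentiable at the relevant points.
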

 The inequality is elementary and left to the reader. It would be possible to prove a stronger result since there is stronger decay in the regime $\|c-a\| \gg \|a-b\|$, however, this is not required.

\subsection{The proof.}
\begin{proof}[Proof of Theorem 2] Let $\left\{x_1, \dots, x_N \right\} \subset \mathbb{T}^2$ be given. We
start by noting
that
$$ N +  \sum_{i,j=1 \atop i \neq j}^{N}{ \frac{1}{1+ N \|x_i -x_j\|^2}} =   \sum_{i,j=1}^{N}{ \frac{1}{1+ N \|x_i -x_j\|^2}}$$
and we may thus work with the full term that includes self-interactions.
If there are many points that do not
have any other points in their immediate vicinity
$$ A= \left\{1 \leq i \leq N: \forall j \neq i:~ \|x_i - x_j \| \geq \frac{1}{300\sqrt{N}} \right\}, \quad \# A \geq \frac{N}{10},$$
then it is easy to conclude the result: we can simply select this subset of points and argue that, since all pairwise distances are bounded away from the singularity and thus comparable to the singular kernel
\begin{align*}
  \sum_{i,j=1}^{N}{ \frac{1}{1+ N \|x_i -x_j\|^2}} &\geq \sum_{i, j \in A}^{}{ \frac{1}{1 + N \|x_i -x_j\|^2}} = \#A +  \sum_{i, j \in A \atop i \neq j}^{}{ \frac{1}{1 + N \|x_i -x_j\|^2}}\\
  &\geq  \# A + \frac{1}{90000}   \sum_{i, j \in A \atop i \neq j}^{}{ \frac{1}{N \|x_i -x_j\|^2}} \\
&\geq \# A + \frac{1}{900000}   \sum_{i, j \in A \atop i \neq j}^{}{ \frac{1}{\# A \|x_i -x_j\|^2}}  \\
  &\gtrsim  \# A \log{(\# A)} \gtrsim N \log{N}.
  \end{align*}
We now deal with the more interesting remaining case 
$$ \# \left\{1 \leq i \leq N: \exists j \neq i \quad \|x_i - x_j \| \leq \frac{1}{300\sqrt{n}} \right\} \geq \frac{9N}{10}.$$
The next step is the construction of a subset $A \subset \left\{1, \dots, N\right\}$ of size at least $\#A \geq 3N/5$ and a bijective,
fixed-point-free map
$\pi: A \rightarrow A$ satisfying $\pi^2= \mbox{id}$ (i.e. grouping in pairs) such that
$$ \forall~a \in A \qquad \|x_{a} - x_{\pi(a)}\| \leq \frac{1}{100\sqrt{N}}.$$
The construction of this subset $A$ and map $\pi$ is done in the following explicit way (which, casually, can be summarized as follows: find your closest friend; if your friend is already matched up, try to see whether your close friend or the person who matched with your friend have any other friends that are still not matched and match with them).
\begin{enumerate}
\item We order the points in some arbitrary way $x_1, x_2, \dots, x_N$. Going from $1 \leq i \leq N$, 
if $\pi^{-1}(i)$ is not defined and if there exists a point $x_j$ 
with $\|x_i - x_j\| \leq 1/(300 \sqrt{N})$ and $\pi(j)$ undefined, then $\pi(i) := j$ and $\pi(j) :=i$.
\item After having done this, go through the list once more from the beginning. If $\pi(i)$ is undefined,
we check whether the nearest neighbor of $x_i$, $x_j$, satisfies $\|x_i - x_j\| \leq 1/(300\sqrt{N})$. If it
does, then $\pi(j)$ has to be defined and $\pi(j) \neq i$.
We then check whether there is a point $x_k$ in the $1/(300 \sqrt{N})$ neighborhood of $\left\{x_j, x_{\pi(j)} \right\}$ for which $\pi$ is not defined and, if so, define $\pi(i):=k, \pi(k) :=i$.
\end{enumerate}
It is easy to see that the algorithm is well defined on a subset $A \subset \left\{1,2,\dots, N\right\}$ and yields a bijective, fixed-point-free involution. It remains to show that $\#A \geq 3N/5$.\\

\textbf{Claim.} \textit{ $\pi$ that is defined on at least $3N/5$ of all points.}
\begin{proof}[Proof of Claim] By assumption, at least $9N/10$ have their nearest neighbor at distance $\leq 1/(300\sqrt{N})$. Let us now take one of these points $x_i$ and assume that $\pi(i)$ is undefined. This means that $x_i$ is
the only point in the $ 1/(300\sqrt{N})$-neighborhood of  $\left\{x_j, x_{\pi(j)} \right\}$, where $x_j$ is the nearest neighbor to $x_i$. Therefore, for every point $x_i$ for which $\pi$ is not defined, we can find two unique points $\left\{x_j, x_{\pi(j)} \right\}$ for which $\pi$ is defined. This implies that $\pi$ can at most be undefined for one third of the $9N/10$ points with a close
nearest neighbor, which implies the statement.
\end{proof}
We now only concentrate on these points and set
$$ A = \left\{1 \leq i \leq N:~\pi~\mbox{is defined} \right\},$$
 focus on that selected subset and use the trivial bound
$$   \sum_{i,j=1 \atop i \neq j}^{N}{ \frac{1}{1+ N \|x_i -x_j\|^2}} \geq   \sum_{i, j \in A \atop i \neq j}^{}{ \frac{1}{1+ N \|x_i -x_j\|^2}}.$$
The next step is the construction of a new point set based on $\left\{x_i: i \in A\right\}$ by replacing every pair
$(x_i, x_{\pi(i)})$ by two points located in $(x_i + x_{\pi(i)})/2$. We denote this new set of points by $\left\{y_1, y_2, \dots, y_{\#A} \right\} \subset \mathbb{T}^2$.
It remains to bound the effect that this is having on the energy. Since points are already grouped in pairs of 2, we will compute the effect on the energy by seeing how it affects the interactions between two pairs of two points each. Let us assume the pairs are $a_1, a_2$ (having their geometric average at $a_3$) and $b_1, b_2$ (having their geometric average at $b_3$). Abbreviating
$$ E(x, y) = \frac{1}{1+N\|x-y\|^2},$$
we see from Lemma 2 that
\begin{align*}
\left| E(a_1, b_1) + E(a_2, b_1) - 2E(a_3, b_1)\right| &\leq \frac{1}{10000} \left( E(a_1, b_1) + E(a_2, b_1)\right) \\
\left| E(a_1, b_2) + E(a_2, b_2) - 2E(a_3, b_2)\right| &\leq \frac{1}{10000} \left( E(a_1, b_2) + E(a_2, b_2)\right)
\end{align*}
Moreover, by monotonicity, $E(a_1, a_3) \geq E(a_1, a_2)$.

Summing up, we see that replacing $a_1, a_2$ by two points in $a_3$ changes the energy by at most $1/10000$ of the original energy. In the next step, we see that
\begin{align*}
\left| E( b_1, a_3) + E(b_2, a_3) - 2E(b_3, a_3)\right| \leq \frac{1}{5000} \left( E(b_1, a_3) + E(b_2, a_3)\right)
\end{align*}
which means that replacing $b_1$ and $b_2$ by their geometric average also has only a small effect.
\begin{center}
\begin{figure}[h!]
\begin{tikzpicture}[scale=1.8]
\filldraw (0,0) circle (0.04cm);
\filldraw (0.5,0.3) circle (0.04cm);
\filldraw (0.5,0.5) circle (0.04cm);
\filldraw (1,1) circle (0.04cm);
\filldraw (1,0) circle (0.04cm);
\filldraw (0.7,0) circle (0.04cm);
\filldraw (1.4,1) circle (0.04cm);
\filldraw (1.6,0.8) circle (0.04cm);
\filldraw (1.5,0.5) circle (0.04cm);
\draw  (01,1) -- (01.4,1);
\draw  (0.5,0.3) -- (0.5,0.5);
\draw  (1.5,0.5) -- (1.6,0.8);
\draw  (0.5,0.3) -- (0.5,0.5);
\draw  (1.0,0) -- (0.7,0);
\draw (3,0) circle (0.04cm);
\draw (4,1) circle (0.04cm);
\draw (3.5,0.3) circle (0.04cm);
\draw (3.5,0.5) circle (0.04cm);
\draw (4,0) circle (0.04cm);
\draw (3.7,0) circle (0.04cm);
\draw (4.4,1) circle (0.04cm);
\draw (4.6,0.8) circle (0.04cm);
\draw (4.5,0.5) circle (0.04cm);
\draw  (4,1) -- (4.4,1);
\filldraw (4.2,1) circle (0.04cm);
\draw  (3.5,0.3) -- (3.5,0.5);
\filldraw (3.5,0.4) circle (0.04cm);
\draw  (4.5,0.5) -- (4.6,0.8);
\filldraw (4.55,0.65) circle (0.04cm);
\draw  (3.5,0.3) -- (3.5,0.5);
\filldraw (3.5,0.4) circle (0.04cm);
\draw  (4.0,0) -- (3.7,0);
\filldraw (3.85,0) circle (0.04cm);
\end{tikzpicture}
\caption{After having distilled many pairs, we replace them by their geometric average.}
\end{figure}
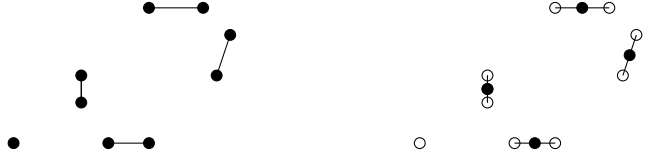
\end{center}
\vspace{0pt}
 Altogether, by repeating this
procedure over all pairs of pairs of two points, 
$$ \left|   \sum_{i, j \in A}^{}{ \frac{1}{1+ N \|x_i -x_j\|^2}} -   \sum_{i, j \leq \# A}^{}{ \frac{1}{1+ N \|y_i - y_j\|^2}} \right| \leq \frac{1}{1000}   \sum_{i, j \in A}^{}{ \frac{1}{1+ N \|x_i -x_j\|^2}}$$
and therefore
$$    \sum_{i, j \leq \#A}^{}{ \frac{1}{1+ N \|y_i - y_j\|^2}}  \geq \frac{999}{1000}   \sum_{i, j \in A}^{}{ \frac{1}{1+ N \|x_i -x_j\|^2}}.$$

Moreover, points in $\left\{y_1, y_2, \dots, y_{\#A} \right\} \subset \mathbb{T}^2$ come in pairs of two and by removing duplicates, we obtain a set $\left\{z_1, z_2, \dots, z_{\#A/2} \right\} \subset \mathbb{T}^2$ with
$$    \sum_{i, j \leq \# A}^{}{ \frac{1}{1+ N \|y_i - y_j\|^2}}  = 4    \sum_{i, j \leq \#A/2}^{}{ \frac{1}{1+ N \|z_i - z_j\|^2}}.$$  
Summarizing, we have obtained a set of at least $3N/10$ points ($30\%$ of the original points) that have at most $25(1000/999)\% \leq 26\%$ of the energy of the original set of points. This allows us to conclude the argument: given any point set $\left\{x_1, \dots, x_N \right\} \subset \mathbb{T}^2$, we iterate this procedure until, for the first time, we end up in the case of many points not having a close nearest neighbor. If this requires $k$ iterations, then we end up with a set of points of size at least $(0.3)^k n$ and energy at most $0.26^k E$, where $E$ is the energy of the original set. The energy $E_1$ of the reduced set is at least $(0.3)^k n \log{\left((0.3)^k n\right)}$ and thus
$$ \sum_{i,j=1}^{N}{ \frac{1}{1 + N\|x_i -x_j\|^2}} \geq \frac{1}{0.26^k} 0.3^k  N \log{\left(0.3^k N\right)}.$$
If $k \leq \log{N}/100$, then
$$  \frac{0.3^k  }{0.26^k} N \log{\left(0.3^k N\right)} \geq    N \log{\left( 0.3^{\frac{\log{N}}{100}} N\right)} \gtrsim N \log{\sqrt{N}} \sim N \log{N}.$$
Otherwise, we stop after $\log{N}/100$ iterations, use the number of remaining points as a trivial lower bound on the energy of the remaining point set and conclude
 $$ \sum_{i,j=1}^{N}{ \frac{1}{1 + N\|x_i -x_j\|^2}} \geq \frac{1}{0.26^k} [0.3^k  N] \gtrsim N^{1.001} \gtrsim N \log{N}.$$
\end{proof}
The proof has the following dynamical interpretation: if $\left\{x_1, \dots, x_N \right\} \subset \mathbb{T}^2$ satisfies
$$ \sum_{i,j=1}^{N}{ \frac{1}{1 + N\|x_i -x_j\|^2}} \leq  c N \log{N},$$
then the process of repeatedly `melting nearby pairs points into one' is bound to result in a well-separated point set after at most $ \log{c}$ steps. This can be understood as a regularity statement for optimal configurations. More precise information would be of interest; in particular, repeating the question from the introduction, is every optimal configuration maximally separated (meaning that we have a uniform separation $\|x_i - x_j\| \gtrsim N^{-1/2}$ whenever $i \neq j$)?

\section{Proof of Theorem 4 and Corollary 2}

\subsection{Proof of Theorem 4.}
\begin{proof} The main idea is to bound the exponential sum by a weighted exponential sum over the entire space with a weight chosen such that we have rapid decay as soon as $\|k\| \gtrsim X$ and, simultaneously, obtain meaningful quantities that can be interpreted in a different manner. The heat evolution provides a natural example and we can write
\begin{align*}
 \sum_{k \in \mathbb{Z}^d \atop \|k\| \leq X}{ \left| \sum_{n=1}^{N}{ e^{2 \pi i \left\langle k, x_n \right\rangle}}\right|^2}
&\leq e \sum_{k \in \mathbb{Z}^d}{ e^{-\|\xi\|^2 X^{-2}} \left| \sum_{n=1}^{N}{ e^{2 \pi i \left\langle k, x_n \right\rangle}}\right|^2} \\
&= e\left\| e^{X^{-2} \Delta} \left( \sum_{n=1}^{N}{ \delta_{x_n}}\right) \right\|_{L^2(\mathbb{T}^d)}^2,
\end{align*}
where, as in the introduction, $e^{t \Delta} f$ is the heat evolution at time $t$ with $f$ as initial datum. We will now bound this expression using information about the heat kernel. Squaring out, collecting diagonal and off-diagonal terms gives
\begin{align*}
 \left\|   \sum_{n=1}^{N}{e^{X^{-2} \Delta} \delta_{x_n}} \right\|_{L^2(\mathbb{T}^d)}^2 &=
 \sum_{n=1}^{N} \int_{\mathbb{T}^d}{ [e^{X^{-2}\Delta} \delta_{x_n}](x)^2 dx} \\
 &+  \sum_{i, j=1 \atop i \neq j}^{N} \int_{\mathbb{T}^d}{ [e^{X^{-2}\Delta} \delta_{x_i}](x)  [e^{X^{-2}\Delta} \delta_{x_j}](x) dx}.
\end{align*}
The simple heat-kernel estimate
 $$ [e^{t \Delta} \delta_{y}](x) \lesssim \frac{1}{t^{d/2}} \exp{\left(-\frac{c \|x-y\|^2}{ t}\right)},$$
 where $c$ is a constant depending only on the dimension, implies
$$ \int_{\mathbb{T}^d}{ [e^{X^{-2}\Delta} \delta_{x_n}](x)^2 dx} \lesssim X^{2d} \int_{\mathbb{T}^d}{ \exp{\left(- 2c X^2 \|x\|^2\right)} dx} \lesssim X^d.$$
The second integral can be reduced to the heat kernel estimate by using self-adjointness of the heat semigroup
 $$ \left\langle e^{t\Delta} \delta_x, e^{t \Delta} \delta _y\right\rangle =  \left\langle e^{2t\Delta} \delta_x,  \delta_y\right\rangle = [e^{2t\Delta} \delta_x](y)$$
 to obtain
 $$  \int_{\mathbb{T}^d}{ [e^{X^{-2}\Delta} \delta_{x_i}](x)  [e^{X^{-2}\Delta} \delta_{x_j}](x) dx} \lesssim X^d \exp\left(-2 c X^2 \|x_i - x_j\|^2\right)$$
 which, after summation, implies 
\begin{align*} \sum_{k \in \mathbb{Z}^d \atop \|k\| \leq X}{ \left| \sum_{n=1}^{N}{ e^{2 \pi i \left\langle k, x_n \right\rangle}}\right|^2} &\lesssim N X^d +   \sum_{i, j=1 \atop i \neq j}^{N}{ X^d e^{- c X^2 \|x_i - x_j\|^2}} \\
 &=  \sum_{i, j=1}^{N}{ X^d e^{- c X^2 \|x_i - x_j\|^2}}
 \end{align*}
\end{proof}

\subsection{Proof of Corollary 2}
Montgomery's Lemma implies the lower bound, it suffices to show that the upper bound has matching asymptotic behavior.
\begin{proposition} If the set $\left\{x_1, \dots, x_N \right\} \subset \mathbb{T}^d$ satisfies $\|x_i - x_j\| \gtrsim N^{-1/d}$ whenever $i \neq j$ and $X \gtrsim N^{1/d}$, then 
$$   \sum_{i, j=1}^{N}{ X^d e^{-c X^2 \|x_i - x_j\|^2}} \sim N X^d.$$
\end{proposition}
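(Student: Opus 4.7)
The lower bound $\gtrsim NX^d$ is immediate from the diagonal contribution, since each of the $N$ terms with $i=j$ equals $X^d$. So the content of the proposition is the matching upper bound
$$\sum_{i \neq j} e^{-cX^2\|x_i - x_j\|^2} \lesssim_d N.$$
The plan is to prove the stronger per-point estimate
$$\sum_{j \neq i} e^{-cX^2\|x_i - x_j\|^2} \lesssim_d 1$$
uniformly in $i$ and then sum over $i$.

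To establish the per-point estimate, fix $x_i$ and partition the remaining points into shells measured in units of $N^{-1/d}$: for each integer $r \geq 1$ set
$$S_r = \left\{ j \neq i : rN^{-1/d} \leq \|x_i - x_j\| < (r+1)N^{-1/d} \right\}.$$
By the separation hypothesis, the balls $B(x_j, c_0 N^{-1/d})$ for $j \in S_r$ are pairwise disjoint and contained in the thickened shell $B(x_i, (r+2)N^{-1/d}) \setminus B(x_i, (r-1)N^{-1/d})$, whose volume is $O((r+2)^d - (r-1)^d) \cdot N^{-1} = O(r^{d-1} N^{-1})$. A volume comparison yields $|S_r| \lesssim_d r^{d-1}$.

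On $S_r$ we have $X^2 \|x_i - x_j\|^2 \geq r^2 X^2 N^{-2/d} \geq c_1 r^2$ for some $c_1 > 0$ by the hypothesis $X \gtrsim N^{1/d}$. Therefore
$$\sum_{j \neq i} e^{-cX^2\|x_i - x_j\|^2} \lesssim_d \sum_{r \geq 1} r^{d-1} e^{-cc_1 r^2} \lesssim_d 1,$$
and summing over $i$ gives the desired bound. The one mild subtlety, which I expect to be the only non-routine point, is that once $rN^{-1/d}$ exceeds the diameter of $\mathbb{T}^d$ the shell volume comparison fails; but in that regime $r \gtrsim N^{1/d}$, so the trivial bound $|S_r| \leq N$ combined with the Gaussian factor $e^{-cc_1 r^2}$ absorbs everything and the tail is negligible. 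Thus the whole estimate reduces to the convergence of a one-variable Gaussian-weighted polynomial sum.
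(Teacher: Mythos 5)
Your proposal is correct and follows essentially the same route as the paper: the diagonal gives $NX^d$, the separation hypothesis gives the shell count $\lesssim r^{d-1}$ at distance $\sim rN^{-1/d}$, and $X \gtrsim N^{1/d}$ reduces the off-diagonal sum to the convergent Gaussian-weighted sum $\sum_r r^{d-1}e^{-c'r^2}$. The only cosmetic difference is that the paper truncates the shell index at $N^{1/d}$ (so the torus-diameter issue you flag never arises) and compares the sum to an integral rather than quoting convergence directly.
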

\begin{proof}The diagonal terms $i=j$ contribute $NX^d$. It remains to show that the off-diagonal terms do not contribute more.
 Fix an arbitrary $x_i$. Since the points are $N^{-1/d}-$separated, we have that
$$ \# \left\{ x_j:   \frac{k}{N^{1/d}} \leq \|x_i - x_j \| \leq \frac{k+1}{N^{1/d}}\right\} \lesssim k^{d-1}.$$
This suffices to conclude the result since 
\begin{align*}
 \sum_{i=1}^{N} \sum_{j \neq i}{  X^d e^{-cX^2 \|x_i - x_j\|^2}} &\lesssim X^d N \sum_{k=1}^{N^{1/d}}{ k^{d-1} e^{-c X^2 \frac{k^2}{N^{2/d}}}} \\
&\lesssim X^d N\left(e^{-c X^2 N^{-2/d}} +  \int_{1}^{N^{1/d}+1}{ k^{d-1} e^{- c X^2 N^{-2/d} k^2} dk}\right) \\
&\lesssim X^d N \left( 1+   \int_{1}^{N^{1/d}+1}{ k^{d-1} e^{- c X^2 N^{-2/d} k^2} dk}\right).
\end{align*}
Since $X^2 N^{-2/d} \gtrsim 1$, we can bound the integral by
$$   \int_{1}^{N^{1/d}+1}{ k^{d-1} e^{-c  X^2 N^{-2/d} k^2} dk} \lesssim   \int_{1}^{\infty}{ k^{d-1} e^{-c  k^2} dk} \lesssim_{c,d} 1.$$

\end{proof}

\subsection{Proof of the Propositions}
\begin{proof}
If $f \in L^1(\mathbb{T})$, then self-adjointness of the heat semigroup, $e^{t\Delta}1 = 1$ and the $L^1-L^{\infty}$ duality imply
$$ \left\langle e^{t\Delta}f, N - \sum_{n=1}^{N}{\delta_{x_n}}  \right\rangle = \left\langle f,  N - e^{t\Delta} \sum_{n=1}^{N}{\delta_{x_n}} \right\rangle \leq \|f\|_{L^1} \left\|  N - e^{t\Delta} \sum_{n=1}^{N}{\delta_{x_n}} \right\|_{L^{\infty}}.$$
Furthermore, by taking $f$ to be an approximation of a Dirac measure located where the maximum is assumed, we see that inequality cannot be improved.
The second proposition is equally simple since
\begin{align*} \left\| N -  \sum_{n=1}^{N}{e^{t\Delta}\delta_{x_n}} \right\|^2_{L^{\infty}(\mathbb{T}^d)} &\geq  \left\| N -  \sum_{n=1}^{N}{e^{t\Delta}\delta_{x_n}} \right\|^2_{L^{2}(\mathbb{T}^d)}\\
& = \sum_{k \in \mathbb{Z}^d \atop k \neq 0}{ e^{- k^2 t} \left| \sum_{n=1}^{N}{e^{2\pi i \left\langle k, x_n \right\rangle}} \right|^2}\\
&\gtrsim  \sum_{\| k\| \leq  t^{-1/2} \atop k \neq 0}{ \left| \sum_{n=1}^{N}{e^{2\pi i \left\langle k, x_n \right\rangle}} \right|^2},
\end{align*}
which gives the result.
The same simple argument gives
$$ \sup_{\|f\|_{L^p}\leq 1}\left| \left\langle e^{t\Delta}f, N - \sum_{n=1}^{N}{\delta_{x_n}}  \right\rangle\right| =  \left\|  N - e^{t\Delta} \sum_{n=1}^{N}{\delta_{x_n}} \right\|_{L^{q}}$$ 
whenever $1/p + 1/q=1$, which naturally motivates the $L^q-$version of the problem. 
\end{proof}

\end{document}